\documentclass{amsart}
\usepackage{graphicx} 

\usepackage{amsfonts}
\usepackage{amssymb}
\usepackage{amsmath}
\usepackage{amsthm}
\usepackage{mathtools}
\usepackage{enumerate, enumitem}
\usepackage{mathrsfs}
\usepackage{xcolor}
\usepackage{hyperref}

\newtheorem*{theorem*}{Theorem}
\newtheorem*{question*}{Question}

\newtheorem{theorem}{Theorem}[section]

\newtheorem*{proposition*}{Proposition}
\newtheorem{lemma}[theorem]{Lemma}
\newtheorem*{lemma*}{Lemma}
\newtheorem{corollary}[theorem]{Corollary}
\newtheorem*{corollary*}{Corollary}
\newtheorem{fact}[theorem]{Fact}
\newtheorem*{fact*}{Fact}

\theoremstyle{definition}
\newtheorem{definition}[theorem]{Definition}
\newtheorem*{definition*}{Definition}
\newtheorem{claim}[theorem]{Claim}
\newtheorem*{claim*}{Claim}

\newtheorem*{conjecture*}{Conjecture}

\theoremstyle{definition}

\theoremstyle{remark}

\newtheorem*{example*}{Example}

\newtheorem*{remark*}{Remark}

\newtheorem*{note*}{Note}

\newcommand{\Z}{\ensuremath{\mathbb{Z}}}

\newcommand{\frc}{\ensuremath{\mathfrak{c}}}

\newcommand{\A}{\ensuremath{\mathbf{A}}}

\newcommand{\cB}{\mathcal{B}}

\newcommand{\ZFC}{\mathrm{ZFC}}

\newcommand{\defeq}{\vcentcolon=}

\newcommand{\acc}{\mathrm{acc}}
\newcommand{\dom}{\mathrm{dom}}

\newcommand{\mcI}{\mathcal I}
\newcommand{\cf}{\mathrm{cf}}
\newcommand{\ra}{\rightarrow}

\AtBeginDocument{%
   \def\MR#1{}
}
\begin{document}

\title{Nonvanishing derived limits from $\clubsuit$-type principles}

\author[M. Casarosa]{Matteo Casarosa}
\address{Departament de Matemàtiques i Informàtica, Universitat de Barcelona, Gran Via de
les Corts Catalanes 585, 08007 Barcelona, Catalonia}
\email{matteo.casarosa@ub.edu}
\urladdr{https://sites.google.com/view/matteocasarosa/}

\subjclass[2020]{03E05, 03E17, 03E75, 18G10}
\keywords{derived limits, pro-category of abelian groups, nontrivial coherence, guessing principles}
\thanks{I would like to thank Jeffrey Bergfalk, Constance Bromham, Andrew Brooke-Taylor, Chris Lambie-Hanson, and Assaf Rinot for helpful conversations.}

\begin{abstract}
Combinatorial set theory provides several tools to study derived limits of certain inverse systems of abelian groups. Most known nonvanishing results for $\lim^n$ with $n>1$ depend on some guessing principles of the form $\mathrm{w}\lozenge(S)$ called \emph{weak diamonds}. In this paper, we explore some applications of the guessing principle $\clubsuit(S)$ and its weakenings instead.    

\end{abstract}

\maketitle

\section{Introduction}

In recent years, there has been significant progress in applying set theory to the derived functors $\lim^n$ of the inverse limit, which measure failures of the inverse limit functor to be exact.  Already in \cite{Gob70}, Goblot showed that inverse systems of low enough cofinality have vanishing derived limits for a sufficiently large $n$. Later, Mitchell (\cite{mitchell1972rings}) showed the optimality of Goblot's hypotheses by producing some (non-surjective) inverse systems $\mathbf{G}_n$ of cofinality $\aleph_n$ such that $\ZFC \vdash \lim^{n+1} \mathbf{G}_n \neq 0$. 

Mardesic and Prasolov in \cite{mardevsic1988strong} indicated that the vanishing of the derived limits of the inverse system $\mathbf{A}$ indexed over $({}^\omega \omega, \leq)$ with objects $A_f = \bigoplus_{i< \omega} \mathbb{Z}^{f(i)}$ and the natural projections as bonding morphisms is a necessary condition for the additivity of strong homology. This system has therefore received much attention, together with the more general systems indexed over $(\mathcal{I},\subseteq)$ where $\mathcal{I}$ is a collection of sets and for any $a \in \mathcal{I}$ the corresponding object is $\bigoplus_{x \in a} H$, for some nontrivial abelian group $H$. Mardesic and Prasolov already noticed that under $\mathrm{CH}$ we have $\lim^1 \mathbf{A} \neq 0$. This turns out to be a consequence of the fact that vanishing phenomena are invariant under taking a cofinal subsystem, together with a very general nonvanishing result for systems indexed over $\subseteq^*$-chains of length $\omega_1$ (see \cite{bekkali2006topics}, Proposition 4.22). Later, Prasolov (\cite[Theorem 3]{PRASOLOV2005493}) showed that not only is this group nonvanishing, but its size is maximal.  

More recently, some results have been obtained for \emph{higher derived limits} (i.e., $\lim^n$ for $n>1$). The proofs are based on some ``step-up'' arguments that, starting from the $\ZFC$ base case of a nonvanishing $\lim^1$, use witnesses for nonvanishing derived limits on sets of cofinality $\aleph_n$ to build one for a set of cofinality $\aleph_{n+1}$. These arguments invariably involve some guessing principles, unless $H$ from the definition above is some large direct sum of groups, similarly to Mitchell's example. In particular:

\begin{enumerate}

    \item Bergfalk (\cite{berg}) derived $\lim^2 \mathbf{A} \neq 0$ from $\mathfrak{b}=\mathfrak{d}=\mathfrak{c}=\aleph_2 + \lozenge(S_1^2)$, which is a consequence of $\mathrm{PFA}$. The guessing principle $\lozenge(S_1^2)$ implies $2^{\aleph_0} \leq \aleph_2$ and this, together with the theorem of Goblot, was the main obstruction to extending the result to $n>2$.

    \item Velickovic and Vignati (\cite{VV24}) proved that, for every $n \in \mathbb{N}$, it is consistent that $\lim^n \mathbf{A} \neq0$. The proof assumes $\bigwedge_{k < n} \mathrm{w}\lozenge (S_k^{k+1})$, where $\mathrm{w}\lozenge (S)$ for a stationary set $S \subseteq \kappa^+$ denotes the corresponding \emph{weak diamond} principle. This is compatible with a large continuum, but still requires some cardinal arithmetic. In particular, $\mathrm{w}\lozenge(\kappa^+)$ is equivalent to $2^\kappa < 2^{\kappa^+}$.

    \item The author and Lambie-Hanson \cite{CASAROSA2026111076} proved that, in fact, $\mathfrak{d}= \aleph_n + \bigwedge_{k < n} \mathrm{w}\lozenge (S_k^{k+1})$ already implies $\lim^n \mathbf{A} \neq0$. The weak diamonds can be dropped for the analogous system $\mathbf{A}[H]$ where $\mathbb{Z}$ is replaced by $\bigoplus_{\alpha \in \omega_n} \mathbb{Z}$.

\end{enumerate}

As the progression above shows, even though there has been progress on the front of the guessing principles necessary for the step-up argument, these have so far remained tied to a very specific kind of guessing and the consequent cardinal arithmetic. 

In this paper, we explore the nonvanishing consequences of the different, orthogonal guessing principle $\clubsuit(S)$ and of some of its weakenings. In particular, we show that $\mathfrak{d}=\mathfrak{c}=\aleph_2$ plus $\clubsuit(S_1^2)$ implies $\lim^2 \mathbf{A} \neq 0$. We also prove a similar theorem for $\lim^2$ for $\subseteq^*$-chain of length $\omega_2$ or higher. Both results adapt some ideas from \cite{CASAROSA2026111076}. 

\section{Preliminaries}

\begin{definition}
  Suppose that $\mcI$ is a collection of sets, $n$ is a positive integer, and 
  $H$ is an abelian group. An \emph{$H$-valued $n$-family indexed over 
  $\mcI$} is a family of functions of the form 
  \[
    \Phi = \left \langle \varphi_u : \bigcap_{i < n} u(i) \ra H ~ \middle| ~ u \in 
    \mcI^n \right \rangle.
  \]
An $n$-family is said to be
  \begin{itemize}
    \item \emph{alternating} if, for every $u \in \mcI^n$ and every permutation 
    $\sigma:n \rightarrow  n$, we have 
    \[
      \varphi_{u \circ \sigma} = \mathrm{sgn}(\sigma) \varphi_u;
    \]
    \item \emph{coherent} if it is alternating and, for all $v \in \mcI^{n+1}$, 
    we have 
    \[
      \sum_{i < n+1} (-1)^i \varphi_{v^i} =^* 0;
    \]
    \item \emph{trivial} if 
    \begin{itemize}
      \item $n = 1$ and there is a function $\psi : \bigcup \mcI \ra H$ 
      such that, for all $u \in \mcI$, we have $\psi \restriction u 
      =^* \varphi_u$; or
      \item $n > 1$ and there is an alternating $(n-1)$-family 
      \[
        \Psi = \left\langle \psi_t : \bigcap_{i < n-1} t(i) \ra H \ \middle| \ t \in 
        \mcI^{n-1} \right\rangle
      \] 
      such that, for all $u \in \mcI^n$, we have
      \[
        \varphi_u =^* \sum_{i < n} (-1)^i \psi_{u^i}.
      \]
    \end{itemize}
    In this case, we say that $\psi$ or $\Psi$ \emph{trivializes} $\Phi$.
  \end{itemize}
\end{definition}

The fact that every trivial family is coherent corresponds to a foundational observation from homological algebra. This is the fact that $\partial \circ \partial =0$ where $\partial$ is the coboundary operator given by the alternating sum above. It is also easy to see that $n$-coherent families modulo $n$-trivial ones form a group:

\begin{definition}
For a family of sets $\mathcal{I} \subseteq \mathcal{P}(X)$ and an abelian group $H$, we write   $\mathbf{A}_\mathcal{I}[H]$ for the inverse system of abelian groups given by $A_a = \bigoplus_{x \in a} H$. We omit the subscript when $\mcI= \{ I_f  \mid f \in {}^\omega \omega   \}$, where $I_f = \{ (i, j) \mid j \leq f(i) \} $, and omit the group when $H = \mathbb{Z}$. 
\end{definition}

\begin{fact}

The derived limit $\lim^n \mathbf{A}_\mathcal{I}[H]$ is isomorphic to the group of $n$-coherent $H$-valued families indexed over $\mcI$ modulo the $n$-trivial ones.
    
\end{fact}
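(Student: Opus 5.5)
The approach is to identify $\lim^n \mathbf{A}_\mcI[H]$ with the $n$-th cohomology of an explicit cochain complex and then compute that cohomology via the usual description of derived limits. The system is indexed over $(\mcI,\subseteq)$, which we may assume is directed (in the case of interest, $I_f \cup I_g \subseteq I_{\max(f,g)}$). Recall that for an inverse system $\mathbf{X}$ over a directed poset, $\lim^n \mathbf{X}$ is the $n$-th cohomology of the Roos complex
\[
  K^n(\mathbf{X}) = \prod_{a_0 \leq \dots \leq a_n} X_{a_0},
\]
or, equivalently, of the alternating Čech-type complex $\prod_{u \in \mcI^{n+1}} X_{\bigcap u}$ after passing to a cofinal structure. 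The plan is to relate this complex to the complex of families $\prod_{u \in \mcI^n} H^{\bigcap u}$ taken modulo finite-support functions.

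\textbf{Step 1.} Embed $\mathbf{A}_\mcI[H]$ in the short exact sequence of inverse systems
\[
  0 \to \mathbf{A}_\mcI[H] \to \mathbf{B} \to \mathbf{B}/\mathbf{A} \to 0,
\]
where $B_a = \prod_{x \in a} H = H^a$ and the quotient has objects $H^a / \bigoplus_a H$. The system $\mathbf{B}$ is flasque in the appropriate sense: it is the right Kan extension of the constant-type product, so it has surjective-enough bonding maps. Restriction $H^b \to H^a$ is split surjective and $\mathbf{B}$ is the product of systems $\prod_{x} H[\{a : x \in a\}]$. Hence $\lim^n \mathbf{B} = 0$ for $n \ge 1$. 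The long exact sequence then gives $\lim^n \mathbf{A} \cong \lim^{n-1}(\mathbf{B}/\mathbf{A})$ for $n \ge 2$, and $\lim^1 \mathbf{A} = \mathrm{coker}(\lim \mathbf{B} \to \lim \mathbf{B}/\mathbf{A})$.

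\textbf{Step 2.} Compute $\lim^{n-1}$ of $\mathbf{D} = \mathbf{B}/\mathbf{A}$ via the alternating cochain complex
\[
  C^{k} = \prod_{u \in \mcI^{k+1}} D_{\bigcap u}
\]
with the Čech coboundary. The key check is that this complex computes derived limits for $\mathbf{D}$. I would justify it by noting that the same complex computes $\lim^*$ for $\mathbf{B}$ and $\mathbf{A}$ by a comparison with the Roos resolution, since $(\mcI,\subseteq)$ is directed and the coefficient systems are "sheaf-like": their value on $a$ is determined pointwise on the elements of $a$. An element of $C^{n-1}$ is a family of $\varphi_u : \bigcap u \to H$ modulo finite support. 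A cocycle is exactly a coherent family, and a coboundary is exactly a trivial family. For $n = 1$, the cokernel description matches the existence of $\psi$ on $\bigcup \mcI$, since $\lim \mathbf{B} = H^{\bigcup \mcI}$. Alternation is handled by the standard fact that the alternating subcomplex is quasi-isomorphic to the full one.

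\textbf{Main obstacle.} The main obstacle is Step 2's justification that the Čech complex over arbitrary tuples, rather than chains, computes $\lim^*$, together with the vanishing of $\lim^{\ge 1}\mathbf{B}$. For this I would cite the standard treatment (Mardešić, \emph{Strong Shape and Homology}, or Bergfalk's framework) rather than reprove it.
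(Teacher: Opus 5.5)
The paper does not prove this Fact; it quotes it as the standard identification due to Marde\v{s}i\'c--Prasolov and Bergfalk. Your outline is that standard argument: embed $\mathbf{A}_\mcI[H]$ into $\mathbf{B}$ with $B_a=H^a$, use $\lim^{\geq 1}\mathbf{B}=0$ to get $\lim^n\mathbf{A}\cong\lim^{n-1}(\mathbf{B}/\mathbf{A})$ (and the cokernel description when $n=1$), then compute $\lim^{*}(\mathbf{B}/\mathbf{A})$ with the cochain complex over tuples, whose cocycles and coboundaries are exactly the coherent and trivial families. The structure is correct, but two of your justifications need repair, and your directedness assumption is doing real work.

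\textbf{Vanishing of $\lim^{\geq 1}\mathbf{B}$.} This does not follow from the bonding maps being split surjective. $\mathbf{B}/\mathbf{A}$ also has split surjective bonding maps, since extension by $0$ descends to the quotient. Yet $\lim^1(\mathbf{B}/\mathbf{A})\cong\lim^2\mathbf{A}$ is consistently nonzero, e.g.\ by this paper. The correct reason is your product decomposition, once you complete it:
\begin{itemize}
\item the factor for $x$ is $H$ on the up-set $U_x=\{a\in\mcI \mid x\in a\}$ and $0$ elsewhere;
\item so its Roos complex is literally the cochain complex of the nerve of $U_x$ with coefficients in $H$;
\item $U_x$ is directed, so its nerve is contractible and the higher limits of the factor vanish;
\item $\lim^n$ commutes with products.
\end{itemize}

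\textbf{The tuple complex (your Step~2).} You need neither ``sheaf-likeness'' nor a detour through $\mathbf{A}$ and $\mathbf{B}$. Suppose $\mcI$ is closed under finite intersections, as $\{I_f\}$ is, since $I_f\cap I_g=I_{\min(f,g)}$. Then $\prod_{u\in\mcI^{k+1}}X_{\bigcap u}=\mathrm{Hom}(Q_k,X)$, where $Q_k$ is the direct sum over $u$ of the representable projective systems $\mathbb{Z}_{\downarrow \bigcap u}$, equal to $\mathbb{Z}$ at $a\subseteq\bigcap u$ and $0$ elsewhere. At each $a$, the complex $Q_*(a)$ is the ordered chain complex of the full simplex on the nonempty set $\{c\in\mcI\mid a\subseteq c\}$, hence acyclic. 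So this complex computes $\lim^*X$ for every system $X$. For a merely directed $\mcI$, pass to its closure under finite intersections, in which $\mcI$ is cofinal.

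\textbf{Directedness.} Directedness (or closure under finite intersections) is a hypothesis implicit in the Fact, not a reduction you are free to make. Add a distinct new point to each member of a strictly $\subseteq^*$-increasing $\omega_1$-sequence. The result is a $\subseteq$-antichain, over which all higher limits vanish, while nontrivial coherent $1$-families still exist.
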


Most of the set-theoretic research on the derived limits $\lim^n$ has focused on whether they vanish or not. For many choices of indexing family $\mcI$ and codomain group $H$, this question is independent of $\mathrm{ZFC}$. However, in many cases, the following combinatorial adaptation of a theorem of Goblot \cite{Gob70} gives a positive answer.

\begin{lemma}[\cite{CASAROSA2026111076}[Proposition 2.7]

\label{lemma_goblot}
Suppose that $H$ is an abelian group, $0<n<\omega$, $\mcI$ is a collection of sets with $\cf(\mcI, \subseteq^*) < \aleph_n$, and $\Phi= \langle \Phi_u \mid u \in \mcI^n \rangle$ is an $H$-valued coherent $n$-family. Then $\Phi$ is trivial.

\end{lemma}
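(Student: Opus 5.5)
We argue by induction on $n$, uniformly in $H$ and in $\mcI$. First we reduce to the case where $\mcI$ itself has size $<\aleph_n$, or even is well-ordered along a cofinal family. Fix $\mathcal{J}\subseteq\mcI$ cofinal in $(\mcI,\subseteq^*)$ with $|\mathcal{J}|<\aleph_n$. We may assume $\bigcup\mcI=\bigcup\mathcal{J}$ up to finite modifications; more carefully, every $a\in\mcI$ is $\subseteq^*$ some $b\in\mathcal{J}$. The key claim is that a coherent family on $\mcI$ is trivial as soon as its restriction to $\mathcal{J}$ is trivial. Indeed, suppose $\Psi$ trivializes $\Phi\restriction\mathcal{J}^n$. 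For each $a\in\mcI$ choose $b(a)\in\mathcal{J}$ with $a\subseteq^* b(a)$, and define the extension of $\Psi$ to tuples from $\mcI$ by pulling back through the map $a\mapsto b(a)$, restricting to $\bigcap t(i)$ and correcting by the coherence terms of $\Phi$ on the tuples mixing $a$ and $b(a)$. This is the standard cofinality invariance of $\lim^n$, and we regard it as routine bookkeeping with alternating sums. So we may assume $|\mcI|<\aleph_n$.

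The base case $n=1$ uses $|\mcI|\le\aleph_0$. Enumerate $\mcI=\{a_k\mid k<\omega\}$. Define $\psi$ on $\bigcup\mcI$ by setting $\psi(x)=\varphi_{a_k}(x)$, where $k$ is least with $x\in a_k$. For any $m$, the set of $x\in a_m$ with $\psi(x)\neq\varphi_{a_m}(x)$ is contained in $\bigcup_{k<m}\{x\in a_k\cap a_m\mid\varphi_{a_k}(x)\ne\varphi_{a_m}(x)\}$, which is finite by coherence. So $\psi$ trivializes $\Phi$.

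For the inductive step, write $\mcI=\bigcup_{\alpha<\kappa}\mcI_\alpha$ as a continuous increasing union with $\kappa=|\mcI|\le\aleph_{n-1}$ and each $|\mcI_\alpha|<\kappa$. If $\kappa$ is finite or countable, the argument is handled as in the base case by a finite-support version. The plan is to build trivializations $\Psi_\alpha$ of $\Phi\restriction\mcI_\alpha^n$ by recursion on $\alpha$, coherent with each other in the strong sense that $\Psi_\beta\restriction\mcI_\alpha^{n-1}=\Psi_\alpha$ exactly. At a successor stage, the difference between $\Phi$ and $\partial\Psi_\alpha$ on tuples involving a new element yields an $(n-1)$-coherent family indexed over the sets $\{a\cap b\mid a\in\mcI_\alpha\}$ for each new $b$. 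This family has cardinality $<\aleph_{n-1}$, so by the induction hypothesis (applied to $n-1$ and the index family $\{a\cap b\}$) it is trivial, and the witness lets us define $\Psi$ on tuples containing $b$ so that the extension works. At limit stages we take unions, which works because the agreement is exact rather than mod finite.

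The main obstacle is the exactness at limits. Trivializations are only determined mod $=^*$, so at each successor stage we must be able to modify the new values on finitely many points to keep old values unchanged. This is possible because the new entries only concern tuples containing the new element. When $\kappa=\aleph_{n-1}$, each initial segment $\mcI_\alpha$ has size $\le\aleph_{n-2}$, so the index families passed to the induction hypothesis have cofinality $<\aleph_{n-1}$, as required. Checking that the successor correction is an alternating coherent $(n-1)$-family is the key computation, via $\partial\circ\partial=0$.
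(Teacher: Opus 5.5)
The paper gives no proof of this lemma; it imports it from \cite{CASAROSA2026111076}, so your argument can only be judged on its own terms. It is the standard Goblot-type induction, and it is correct. You pass to a cofinal $\mathcal{J}$ of size $<\aleph_n$; the extension step you call routine bookkeeping is exactly Lemma~\ref{lemma_extendtriv}, so cite it rather than sketching a homotopy, which would need extra care to come out alternating. You then glue along a countable enumeration for $n=1$. For $n\geq 2$ you recurse along an enumeration of length $\kappa\leq\aleph_{n-1}$, at each step trivializing the ``link'' $(n-1)$-family of the new index over the fewer than $\aleph_{n-1}$ earlier ones.

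A few points should be tightened.

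\begin{enumerate}
\item Add exactly one index per step, $\mathcal{I}_{\alpha+1}=\mathcal{I}_\alpha\cup\{b_\alpha\}$. Your phrase ``for each new $b$'' suggests several, and then tuples containing two new indices are not covered. With one index per step, old values are never modified. Limit stages are then plain unions, because every $n$-tuple from $\mathcal{I}_\alpha$ with $\alpha$ limit already lies in some earlier $\mathcal{I}_\beta$. So the ``main obstacle'' you describe does not actually arise.

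\item Index the link family by tuples $s$ from $\mathcal{I}_\alpha$ with domains $\bigcap_i s(i)\cap b$, not by the collection $\{a\cap b\mid a\in\mathcal{I}_\alpha\}$. The map $a\mapsto a\cap b$ need not be injective, so the inductive hypothesis should be stated for indexed families; the proof is unchanged. Concretely, put $\theta_s=(\varphi_{s^\frown b}+(-1)^n\psi_s)\restriction b$ for $s\in\mathcal{I}_\alpha^{n-1}$. Coherence of $\Phi$ at $v^\frown b$ for $v\in\mathcal{I}_\alpha^n$, together with $\partial\Psi_\alpha=^*\Phi$ on $\mathcal{I}_\alpha^n$, gives $\sum_{i<n}(-1)^i\theta_{v^i}=^*0$. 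If $\chi$ trivializes this family, then $\psi_{t^\frown b}:=\chi_t$, extended by alternation, does the job. For $n=2$, take $\psi_b:=\chi$ extended arbitrarily to $b$.

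\item Countable $\kappa$ needs no separate ``finite-support'' treatment. For $n\geq 2$ the same recursion applies, with finite link families.
\end{enumerate}
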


Moreover, it is easy to see that an $n$-coherent family on a $\subseteq^*$-cofinal set extends to the entire indexing set. The proof of \cite[Proposition 2.4]{CASAROSA2026111076} effectively shows that any trivialization of a given family extends similarly.   

\begin{lemma}[after \cite{CASAROSA2026111076}[Proposition 2.4]
\label{lemma_extendtriv}
Suppose $\Psi$ is a trivialization of $\Phi \restriction \mathcal{J}$ and $\mathcal{J}$ is a $\subseteq^*$-cofinal subset of $\mcI$, where $\Phi$ is indexed over $\mcI$. Then there exists $\Psi'$ trivializing $\Phi$ such that $\Psi' \restriction \mathcal{J} = \Psi$.

\end{lemma}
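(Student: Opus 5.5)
We want to extend a trivialization $\Psi=\langle\psi_t\mid t\in\mathcal J^{n-1}\rangle$ of $\Phi\restriction\mathcal J$ to all of $\mcI^{n-1}$. The plan is to reduce the general index to indices in $\mathcal J$ via a choice of $\subseteq^*$-upper bounds, exactly as one extends a coherent family from a cofinal set. Using cofinality, fix for every $a\in\mcI$ some $\hat a\in\mathcal J$ with $a\subseteq^*\hat a$, choosing $\hat a=a$ whenever $a\in\mathcal J$. For $t\in\mcI^{n-1}$ write $\hat t=\langle \hat{t(i)}\mid i<n-1\rangle\in\mathcal J^{n-1}$. The idea is that $\psi'_t$ should be $\psi_{\hat t}$ restricted to $\bigcap t$, corrected by a term built from $\Phi$ that records the passage from $t$ to $\hat t$.

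To find the correction, we use the standard prism (cone) homotopy. For $u\in\mcI^{n}$ and $0\le j\le n$, let $w_j(u)\in\mcI^{n+1}$ be the tuple $\langle u(0),\dots,u(j),\hat{u(j)},\dots,\hat{u(n-1)}\rangle$ (for $n$-tuples $u$). Mimicking this one level down, for $t\in\mcI^{n-1}$ and $j<n-1$ set $p_j(t)=\langle t(0),\dots,t(j),\hat{t(j)},\dots,\hat{t(n-2)}\rangle\in\mcI^{n}$, and define
\[
\psi'_t \;=\; \psi_{\hat t}\restriction \textstyle\bigcap t \;+\; \sum_{j<n-1}(-1)^{j}\,\varphi_{p_j(t)}\restriction \textstyle\bigcap t .
\]
This is well defined because $\bigcap p_j(t)\supseteq\bigcap t$. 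In the case $n=1$, the analogous choice is simply $\psi'=\psi$, extended arbitrarily off $\bigcup\mathcal J$; this works because, for $a\in\mcI$, $\varphi_a=^*\varphi_{\hat a}\restriction a=^*\psi\restriction a$ by coherence and $a\subseteq^*\hat a$. If $t\in\mathcal J^{n-1}$, then $\hat t=t$ and each $p_j(t)$ has a repeated coordinate. By alternation $\varphi_{p_j(t)}=0$, so $\psi'_t=\psi_t$, as required. The family $\Psi'$ is not obviously alternating, so we fix this by antisymmetrizing: we replace $\psi'_t$ by $\frac{1}{(n-1)!}\sum_\sigma \mathrm{sgn}(\sigma)\psi'_{t\circ\sigma}$. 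Dividing is not available in $H$, however, so we will instead first fix a well-order of $\mcI$, define $\psi'_t$ only for $t$ strictly increasing in that order, extend by $\mathrm{sgn}$, and set $\psi'_t=0$ when $t$ has repeats. Since $\Psi$ is already alternating, this is consistent with $\Psi$ on $\mathcal J$.

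The main step is to verify that $\varphi_u=^*\sum_{i<n}(-1)^i\psi'_{u^i}$ for $u\in\mcI^n$. We expand the right-hand side into two groups. The first group consists of the terms $\sum_i(-1)^i\psi_{\widehat{u}^i}$, which agree with $\varphi_{\hat u}$ restricted to $\bigcap u$ mod finite because $\Psi$ trivializes $\Phi\restriction\mathcal J$. The second group consists of the $\Phi$-terms, and we match these against the coherence relations $\sum_{k}(-1)^k\varphi_{w_j(u)^k}=^*0$ for $j<n$. The combinatorial identity of the prism decomposition, namely that the boundary of the prism equals the top face minus the bottom face minus the prism over the boundary, shows that the alternating sum of these coherence relations telescopes to $\varphi_{\hat u}-\varphi_u$ minus exactly the second-group terms. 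Here the top face is $w_{n-1}$ with $u(n-1)$ followed by $\hat{u(n-1)}$, and the extreme faces drop to $u$ and $\hat u$. Combining the two groups gives $\varphi_u$ mod finite. All the equalities involved are finitely many $=^*$ relations restricted to $\bigcap u$, so the error stays finite.

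The obstacle I expect is the sign and index bookkeeping in this telescoping, together with its compatibility with the increasing-order convention. I would first check it by hand for $n=2$: there $\psi'_a=\psi_{\hat a}+\varphi_{\langle a,\hat a\rangle}$, and the required identity is the coherence of $\langle a,b,\hat b\rangle$, $\langle a,\hat a,\hat b\rangle$ together with the trivialization on $\langle\hat a,\hat b\rangle$. After that, I would write the general case as the standard chain homotopy formula.
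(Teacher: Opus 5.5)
Your strategy is the right one: pull $\Psi$ back along a choice $a\mapsto\hat a$ of $\subseteq^*$-upper bounds in $\mathcal J$, correct by the prism (chain-homotopy) operator, and restore alternation with the well-order trick. The paper gives no argument of its own beyond pointing to the proof of \cite[Proposition 2.4]{CASAROSA2026111076}. However, \emph{the formula you wrote down is not a trivialization, because the correction term has the wrong sign.}

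Write $\widehat\Psi_t=\psi_{\hat t}\restriction\bigcap t$, $\widehat\Phi_u=\varphi_{\hat u}\restriction\bigcap u$ and $(h\Phi)_t=\sum_{j<n-1}(-1)^j\varphi_{p_j(t)}$. The prism identity you invoke is
\[
\partial(h\Phi)+h(\partial\Phi)=\widehat\Phi-\Phi,
\]
where $(h\partial\Phi)_u=\sum_{j<n}(-1)^j(\partial\Phi)_{w_j(u)}=^*0$. So, exactly as you describe, your second group $\partial(h\Phi)_u$ is $=^*\varphi_{\hat u}-\varphi_u$. But your first group is $=^*\varphi_{\hat u}$, so the two groups add up to $2\varphi_{\hat u}-\varphi_u$, not to $\varphi_u$.

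The $n=2$ check you planned exposes this. With $\psi'_a=\psi_{\hat a}+\varphi_{a,\hat a}$, coherence on $\langle a,\hat a,\hat b\rangle$ and $\langle a,b,\hat b\rangle$ gives
\[
\psi'_b-\psi'_a=^*\varphi_{a,b}+2(\varphi_{b,\hat b}-\varphi_{a,\hat a})
\]
on $a\cap b$. The error term is not $=^*0$ in general. For instance, take $H=\mathbb Z$, $\Psi=0$, and $\Phi=\partial\Theta$ with $\theta_c=0$ for $c\neq a$ and $\theta_a\equiv 1$, where $a\in\mcI\setminus\mathcal J$ is infinite. Then on the cofinite part $a\cap\hat a$ of $a$, your $\Psi'$ gives $\psi'_{\hat a}-\psi'_a=1$, while $\varphi_{a,\hat a}=-1$.

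\textbf{The fix} is to subtract the prism term: set $\psi'_t=\psi_{\hat t}-\sum_{j<n-1}(-1)^j\varphi_{p_j(t)}$ on increasing tuples, i.e.\ $\Psi'=\widehat\Psi-h\Phi$. For $n=2$ this gives
\[
\psi'_b-\psi'_a=^*\varphi_{\hat a,\hat b}+\varphi_{a,\hat a}-\varphi_{b,\hat b}=^*\varphi_{a,\hat b}-\varphi_{b,\hat b}=^*\varphi_{a,b}.
\]
In general, take $u$ increasing; all its faces are increasing, so only values given by the formula enter, and
\[
\partial\Psi'_u=\partial\widehat\Psi_u-\partial(h\Phi)_u=^*\varphi_{\hat u}-\bigl(\varphi_{\hat u}-\varphi_u-(h\partial\Phi)_u\bigr)=^*\varphi_u .
\]
With this sign the rest of your plan goes through: the correction vanishes on $\mathcal J^{n-1}$, and the well-order device handles alternation.

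Two smaller points.

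First, the claim $\bigcap p_j(t)\supseteq\bigcap t$ is false, since only $t(i)\subseteq^*\widehat{t(i)}$ holds. Hence $\varphi_{p_j(t)}$ and $\psi_{\hat t}$ may be undefined at finitely many points of $\bigcap t$. Extend them by $0$ there; this is harmless modulo finite.

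Second, the step ``by alternation $\varphi_{p_j(t)}=0$'', and the compatibility of your choice $\psi'_t=0$ with $\Psi$ on tuples with repeats, both use that alternating families vanish on tuples with a repeated entry. This follows from the sign condition when $H$ has no $2$-torsion (e.g.\ $H=\mathbb Z$), but otherwise it should be taken as part of the convention.
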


Even though set-theoretic research on derived limits has focused on the question of vanishing, some authors have also given cardinality estimates for the group of $1$-coherent nontrivial $\mathbb{Z}/2\mathbb{Z}$-valued families over an $\subseteq^*$-increasing chain of countable sets of length (or cofinality) $\omega_1$. The lower bound $2^{\aleph_0}$ was proved within the framework of the \emph{gap cohomology group} by Talayco (\cite{TALAYCO199569}[Corollary 18]). Farah proved within the same framework the optimal estimate of $2^{\aleph_1}$ in \cite{farah1996coherent}. It is easy to see that the $\mathbb{Z}/2\mathbb{Z}$-valued case serves as a lower bound for the estimate of the size of $\lim^1$ with different codomains: in fact, given a family of coherent $\Z/2\Z$-valued families whose pairwise difference is nontrivial, fix $1_H \in H \setminus \{ 0_H \}$ and consider the corresponding $H$-valued families with $0_H$ replacing $0$ and $1_H$ replacing $1$; if the difference $\Phi_H -\Phi'_H  $ of a pair of these was trivialized by some $\{ 1_H, 0, - 1_H\}$-valued function $\psi$ then its ``absolute value'' would correspond to a trivialization of the difference $\Phi-\Phi'$ of the corresponding $\Z/2\Z$-valued pair, a contradiction. This proves a lower bound for every group, which is clearly an exact estimate e.g. for countable groups. More generally, Prasolov effectively proved in \cite{PRASOLOV2005493} the following:   

\begin{lemma}[after (\cite{farah1996coherent} and \cite{PRASOLOV2005493}[Theorem 3] 
\label{cardinality_lemma}
Let $\mathcal{C}_{\omega_1}$ be an increasing chain of cofinality $\omega_1$ in $\mathcal{P}(\omega)/\mathrm{fin}$. Then for any nontrivial abelian group $H$ we have $ \vert \lim^1 \mathbf{A}_{\mathcal{C}_{\omega_1}} [H]\vert = \vert H \vert^{\aleph_1}$.

\end{lemma}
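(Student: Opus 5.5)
We need to show that $\vert \lim^1 \mathbf{A}_{\mathcal{C}_{\omega_1}}[H]\vert = \vert H\vert^{\aleph_1}$. The plan is to prove the upper bound by counting and the lower bound by an explicit construction.

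\emph{Reduction.} Since vanishing and the group $\lim^1$ are invariant under passing to a cofinal subsystem (Lemma \ref{lemma_extendtriv} together with the extension of coherent families), we may replace $\mathcal{C}_{\omega_1}$ by a cofinal $\subseteq^*$-increasing sequence $\langle a_\alpha \mid \alpha<\omega_1\rangle$ of infinite subsets of $\omega$. We may also assume that $a_{\alpha+1}\setminus a_\alpha$ is infinite for every $\alpha$.

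\emph{Upper bound.} A $1$-family on this chain is a sequence $\langle \varphi_\alpha : a_\alpha\to H\rangle$. The number of such sequences is at most $(\vert H\vert^{\aleph_0})^{\aleph_1}=\vert H\vert^{\aleph_1}$, and this bounds the size of the quotient.

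\emph{Lower bound.} The goal is $\vert H\vert^{\aleph_1}$ pairwise inequivalent coherent families, and I would combine two sources.

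First, Farah's theorem gives $2^{\aleph_1}$ $\mathbb{Z}/2\mathbb{Z}$-valued coherent families over an $\omega_1$-chain that are pairwise nontrivially different. The transfer argument in the paragraph above turns these into $2^{\aleph_1}$ distinct elements of $\lim^1\mathbf{A}[H]$. This already handles the case $\vert H\vert\le 2^{\aleph_1}$, since then $\vert H\vert^{\aleph_1}=2^{\aleph_1}$.

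For large $H$, I would build families directly by recursion on $\alpha<\omega_1$, indexed by sequences $h\in {}^{\omega_1}H$:
\begin{enumerate}
\item Fix points $n_\alpha\in a_{\alpha+1}\setminus a_\alpha$, chosen so that $n_\alpha\notin a_\beta$ for every $\beta\le\alpha$. This needs care at limit stages because the chain is only $\subseteq^*$-increasing; one can instead work with infinite blocks $b_\alpha\subseteq a_{\alpha+1}$ almost disjoint from every earlier $a_\beta$.
\item At successor stages, let $\varphi^h_{\alpha+1}$ extend $\varphi^h_\alpha$ by putting the value $h(\alpha)$ on the new block $b_\alpha$.
\item At limit stages, use the standard countable-cofinality step: a coherent family over a countable cofinal sequence is trivial (Lemma \ref{lemma_goblot} with $n=1$). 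Its trivialization can be chosen to agree with $h$'s prescribed values on a fixed coding block.
\end{enumerate}
If $h\ne h'$, the difference $\Phi^h-\Phi^{h'}$ should record $h-h'$ on the blocks $b_\alpha$. A trivialization $\psi$ of this difference would be one function on $\bigcup_\alpha a_\alpha$ matching the difference mod finite on every $a_\alpha$.

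Distinguishing all $\vert H\vert^{\aleph_1}$ families this way is too strong: trivial differences can absorb the values on the blocks. So the cleaner route is a tensor-style argument. Fix one nontrivial $\mathbb{Z}$-valued coherent family $\Phi$ with values in $\{0,1\}$; it exists by the transfer applied to Farah or Talayco. For each $\alpha$, take its restriction-type variants and form $\sum_\alpha h_\alpha\cdot\Phi^{(\alpha)}$ over pairwise distinct stationary pieces of $\omega_1$. A pressing-down (Fodor) argument should show that a trivialization of a difference forces the coefficients to agree on a stationary set, hence everywhere.

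The main obstacle is the large-$H$ lower bound: guaranteeing that $\vert H\vert^{\aleph_1}$ families remain pairwise inequivalent, rather than only $2^{\aleph_1}$ of them. I expect the decisive step to be the Fodor argument. A putative trivialization $\psi$ of $\Phi^h-\Phi^{h'}$ has, for each limit $\alpha$, a finite error set below some level. Pressing down fixes that error and a countable amount of information on a stationary set, which then contradicts the nontriviality of the base family on that set.
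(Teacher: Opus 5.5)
Your upper bound is fine. Your treatment of $\vert H\vert\le 2^{\aleph_1}$ is also fine and matches what the paper says: the $\{0,1_H\}$-transfer of Farah's $2^{\aleph_1}$ pairwise inequivalent $\Z/2\Z$-valued families. The paper does not prove more itself; for arbitrary $H$ it cites Prasolov's Theorem~3. So $\vert H\vert>2^{\aleph_1}$ is exactly the part beyond Farah, and there your proposal has a genuine gap.

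The ``tensor-style'' argument is only a heuristic. The variants $\Phi^{(\alpha)}$ and the stationary pieces are never defined. A combination $\sum_{\xi<\omega_1}h_\xi\Phi^{(\xi)}$ with arbitrary coefficients need not even be coherent, since for a fixed pair of levels infinitely many summands may contribute nonzero errors. And ``the coefficients agree on a stationary set, hence everywhere'' does not follow. The pressing-down argument you describe is the usual proof that a single family is nontrivial; it does not by itself separate $\vert H\vert^{\aleph_1}$ classes.

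The missing idea is that pairwise separation by hand is unnecessary; split by cardinal arithmetic (in this range $H$ is infinite).

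\emph{Case $\vert H\vert^{\aleph_1}>\vert H\vert^{\aleph_0}$.} Your abandoned block construction works after all. Put the constant value $h(\alpha)$ on $a_{\alpha+1}\setminus a_\alpha$, and use Lemma~\ref{lemma_goblot} at limits with canonically fixed choices. This gives coherent $\Phi^h$, $h\in{}^{\omega_1}H$. If $h\neq h'$, then at the first level where they differ, $\varphi^h_{\alpha+1}$ and $\varphi^{h'}_{\alpha+1}$ disagree on an infinite set. Now suppose $\Phi^h$ and $\Phi^{h'}$ lie in the class of $\Phi^{h_0}$, and $\psi_h$, $\psi_{h'}$ trivialize $\Phi^h-\Phi^{h_0}$ and $\Phi^{h'}-\Phi^{h_0}$. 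Then $\psi_h=\psi_{h'}$ forces $\Phi^h-\Phi^{h'}$ to be $=^*0$ at every level, hence $h=h'$. So each class contains at most $\vert H\vert^{\aleph_0}$ of the $\Phi^h$, which yields $\vert H\vert^{\aleph_1}$ classes.

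\emph{Case $\vert H\vert^{\aleph_1}=\vert H\vert^{\aleph_0}$.} Fix one nontrivial coherent family of sets $b_\alpha\subseteq a_\alpha$. For $k\in{}^{\omega}H$, let $\varphi^k_\alpha(x)=k(x)$ if $x\in b_\alpha$ and $0$ otherwise. This family is coherent and $\Phi^k-\Phi^{k'}=\Phi^{k-k'}$. If $k$ has cofinite support, $\Phi^k$ is nontrivial: a trivializer $\psi$ would make $\{x:\psi(x)=k(x)\neq0\}$ trivialize $\langle b_\alpha\rangle$. Now code $f\in{}^{\omega}H$ by $k_f(n)=\iota(f\restriction n)$, with $\iota:H^{<\omega}\to H$ injective. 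For $f\neq g$, $k_f-k_g$ has cofinite support, so this gives $\vert H\vert^{\aleph_0}=\vert H\vert^{\aleph_1}$ pairwise inequivalent classes.
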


We now introduce a series of progressively weaker guessing principles whose bearing on nonvanishing derived limits is the focus of this paper. 

Throughout the following, let $\lambda$ denote an uncountable regular cardinal, and $S$ denote a stationary subset of $\lambda$.

\begin{definition}
The principle $\clubsuit(S)$ asserts the existence of a sequence $\langle A_\alpha \mid \alpha \in S \rangle$
such that:

\begin{itemize}
    \item for all $\alpha \in S$, $A_\alpha$ is a cofinal subset of $\alpha$;
    \item if $Z$ is a cofinal subset of $\lambda$, then the following set is stationary:
    \[
    \{\alpha \in S \mid A_\alpha \subseteq Z\}.
    \]
\end{itemize}
\end{definition}

\begin{definition}
Let $\overline{\mu}=\langle \mu_\alpha \mid \alpha \in \lambda \rangle  $  be a sequence of cardinals. The principle $\clubsuit^{\leq\overline \mu}(S)$ asserts the existence of a sequence $\langle \cB_\alpha \mid \alpha \in S \rangle$ such that:

\begin{itemize}
    
    \item If $B \in \cB_\alpha$, then $B$ is an unbounded subset of $\alpha$.  

    \item $\vert \cB_\alpha \vert \leq \mu_\alpha$. 
    
    \item For every $Z \in [\lambda]^\lambda$, 

\[      \{ \alpha \in S \mid \exists B \in \cB_\alpha \,\, B \subseteq Z   \}   \]

is stationary

\end{itemize}

We simply write $\clubsuit^\leq(S)$ if $\mu_\alpha = \vert \alpha \vert $, and write $\clubsuit^{<\mu}(S)$ if $\mu_\alpha < \mu$ for every $\alpha$.  

\end{definition}

Rather than asking for inclusion, some guessing principles only ask that a guess meet the object to be guessed unboundedly in $\alpha$. We are particularly interested in the following guessing with partial functions.

\begin{definition}
$\clubsuit^{\leq \overline{\mu}}_\rightharpoonup (S)$ asserts the existence of a sequence $\langle \mathcal F_\alpha \mid \alpha \in S \rangle$ such that:

\begin{itemize}
    \item If $g \in \mathcal{F}_\alpha$ then $g$ is a partial function from $\alpha$ to $\alpha$ with domain unbounded in $\alpha$.

    \item $\vert \mathcal{F}_\alpha \vert \leq \mu_\alpha$.

    \item For every function $f: \lambda \to \lambda$, the set 

    \[ \{ \alpha \in S \mid   \exists g \in \mathcal{F}_\alpha \,\,  \sup\{\beta \in \dom(g) \mid g(\beta) = f(\beta) \} =\alpha \}   \]

    is stationary.

\end{itemize}

Again, we drop $\overline{\mu}$ from the notation when $\mu_\alpha = \vert \alpha \vert$, and write ``$<\mu$'' when the number of guesses is strictly below a fixed cardinal $\mu$.
\end{definition} 

$\clubsuit^\leq_\rightharpoonup (S)$ is clearly implied by and very similar to the principle considered by Dzamonja and Shelah in \cite[Question 2.6]{dvzamonja1996saturated}. There, Dzamonja and Shelah ask whether this principle follows from what they call $\clubsuit^*_-(S)$, which is a theorem of $\ZFC$ whenever $\lambda= \mu^+$, $\cf(\mu) = \kappa$ and $S \subseteq S^\lambda_{\neq \kappa}$ is a stationary set of points of cofinality different from $\kappa$. The question appears to be open. A positive answer would give the first examples of nonvanishing results for higher derived limits without guessing principles beyond $\ZFC$  for families taking values in small groups (e.g., $\Z$) rather than large direct sums. Now we show that, at the very least, $\clubsuit^{\leq \overline{\mu}}(S)$ implies $\clubsuit^{\leq \overline{\mu}}_\rightharpoonup(S)$.

\begin{theorem}
For any sequence of cardinals $\overline{\mu}$,   $\clubsuit^{\leq \overline{\mu}}(S) \Longrightarrow  \clubsuit^{\leq \overline{\mu}}_\rightharpoonup(S)$.
\end{theorem}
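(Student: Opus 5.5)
The plan is to code each function $f:\lambda\to\lambda$ as an unbounded subset of $\lambda$, guess that set with $\clubsuit^{\leq\overline{\mu}}(S)$, and decode each guess $B\in\cB_\alpha$ into a partial function $g_B$. Then $\mathcal F_\alpha=\{g_B\mid B\in\cB_\alpha\}$ automatically satisfies $\vert\mathcal F_\alpha\vert\le\vert\cB_\alpha\vert\le\mu_\alpha$.

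\emph{Coding and decoding.} Fix the Gödel pairing bijection $\pi:\lambda\times\lambda\to\lambda$. It satisfies $\pi(\beta,\gamma)\ge\max(\beta,\gamma)$, and the set $D=\{\alpha<\lambda\mid \pi[\alpha\times\alpha]=\alpha\}$ is a club. Given $B\in\cB_\alpha$, let $\mathrm{dom}(g_B)$ be the set of $\beta$ such that $\pi(\beta,\gamma)\in B$ for some $\gamma$. For such $\beta$, let $g_B(\beta)$ be the $\gamma$ for the least such code. Every code in $B$ is below $\alpha$, and $\pi(\beta,\gamma)\ge\max(\beta,\gamma)$, so $g_B$ is a partial function from $\alpha$ to $\alpha$. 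If $\mathrm{dom}(g_B)$ happens to be bounded in $\alpha$, replace $g_B$ by any fixed partial function with unbounded domain, so that the formal requirement on guesses holds.

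\emph{The set to be guessed.} Given $f$, choose recursively an increasing sequence $\langle\beta_\xi\mid\xi<\lambda\rangle$. At stage $\xi$, take $\beta_\xi$ larger than all earlier $\beta_\eta$ and such that $c_\xi\defeq\pi(\beta_\xi,f(\beta_\xi))$ exceeds all earlier $c_\eta$. This is possible because $\pi(\beta,f(\beta))\ge\beta$ and $\lambda$ is regular. Put $Z_f=\{c_\xi\mid\xi<\lambda\}\in[\lambda]^\lambda$. If $B\subseteq Z_f$, then every decoded pair lies on the graph of $f$, so $g_B\subseteq f$. In particular $g_B(\beta)=f(\beta)$ for every $\beta\in\mathrm{dom}(g_B)$, and agreement is unbounded as soon as $\mathrm{dom}(g_B)$ is unbounded in $\alpha$.

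\emph{Main obstacle: unboundedness of the domain.} I need $\mathrm{dom}(g_B)$ to be unbounded in $\alpha$ on a stationary set. A priori, cofinally many codes below $\alpha$ could come from boundedly many $\beta$'s. To rule this out, let $E$ be the set of $\alpha$ such that $\{\xi\mid\beta_\xi<\alpha\}=\{\xi\mid c_\xi<\alpha\}$, with this common set an ordinal $\zeta_\alpha$ and with $\sup_{\xi<\zeta_\alpha}\beta_\xi=\alpha$. Both $\xi\mapsto\beta_\xi$ and $\xi\mapsto c_\xi$ are strictly increasing with values below $\lambda$, so the common closure points form a club, and $E$ is a club. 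Now let $\alpha\in E\cap S$ and suppose $B\in\cB_\alpha$ satisfies $B\subseteq Z_f$. Since $B$ is unbounded in $\alpha$, $B=\{c_\xi\mid\xi\in Y\}$ for some $Y$ cofinal in $\zeta_\alpha$. Hence $\{\beta_\xi\mid\xi\in Y\}\subseteq\mathrm{dom}(g_B)$ is cofinal in $\alpha$. In this case $g_B$ is never the replacement function, and it agrees with $f$ on an unbounded subset of $\alpha$.

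\emph{Conclusion.} By $\clubsuit^{\leq\overline{\mu}}(S)$, the set of $\alpha\in S$ with some $B\in\cB_\alpha$ contained in $Z_f$ is stationary. Intersecting it with the club $E$ gives a stationary set of $\alpha$ at which some $g\in\mathcal F_\alpha$ agrees with $f$ unboundedly often below $\alpha$. This is exactly $\clubsuit^{\leq\overline{\mu}}_\rightharpoonup(S)$.
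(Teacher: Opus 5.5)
Your proof is correct and follows essentially the same route as the paper. Both arguments code the graph of $f$ by a pairing function, guess the code with $\clubsuit^{\leq\overline{\mu}}(S)$, decode each guess into a partial function, and intersect with a club to force the decoded domain to be unbounded. The only difference is in that last step. The paper guesses the full code $\varphi[f]$ and works on $\acc(C\cap D)$, with $D$ the closure points of $f$ and $C$ the closure points of $\varphi$. You instead thin the graph to an increasing sequence whose codes increase along with the arguments, and use the club where both sequences have the same supremum. Your club $D$ is never actually used, since $\pi(\beta,\gamma)\ge\max(\beta,\gamma)$ already does its job.
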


\begin{proof}
Let $ \langle \cB_\alpha\mid\alpha\in S\rangle$ be a $\clubsuit^{\leq \overline{\mu}}(S)$-sequence, and for each $\alpha$ let $\cB_\alpha = \langle B^i_\alpha \mid i < \mu_\alpha \rangle$. 

Fix a bijection $ \varphi:\lambda\times\lambda\rightarrow\lambda$ and let $C= \lbrace\alpha \in \acc(\lambda) \mid  \varphi \restriction(\alpha\times\alpha):
\alpha\times\alpha\rightarrow\alpha \text{ is a bijection}\}$, which is a club. Now for $\alpha \in S$, and $i< \mu_\alpha$ let $g^i_\alpha = \varphi^{-1} [B^i_\alpha]$ if this is a partial function from $\alpha$ to $\alpha$ with domain unbounded in $\alpha$. Otherwise, let $g^i_\alpha$ be any such partial function. Finally, let $\mathcal{G}_\alpha = \langle g^i_\alpha \mid i < \mu_\alpha \rangle$. 

Let $f: \lambda \rightarrow \lambda$ be the function we want to guess. Let $D$ be the club of its closure points. Note that $\vert f \vert = \lambda$ and so $\varphi[f]$ is cofinal in $\lambda$. Now apply $\clubsuit^{\leq \overline{\mu}}(S)$ to $\varphi[f]$ to obtain a stationary set $S' \subseteq S$ such that, for every $\alpha \in S'$ and for some $i_\alpha < \mu_\alpha$, $B^{i_\alpha}_\alpha \subseteq \varphi[f]$. 

We prove that $\langle \mathcal G_\alpha \mid \alpha \in S \rangle$ is a $\clubsuit^{\leq \overline{\mu}}_\rightharpoonup(S)$-sequence and guesses correctly on the stationary set $S'' =S' \cap \acc(C \cap D)$.  

Indeed, for any such $\alpha$, we have  $\varphi^{-1}[B^{i_\alpha}_\alpha]\subseteq f$, hence it is a partial function (since it is included in a function) and is from $\alpha$ to $\alpha$ (since $B^{i_\alpha}_\alpha \subseteq \alpha$ and $\alpha \in C$). Now we prove that $\sup(\dom(\varphi^{-1}[B^{i_\alpha}_\alpha]))=\alpha$. Let $\pi_1$ and $\pi_2$ be the projections on the domain and the codomain coordinates, respectively. Suppose towards a contradiction that 
$(\pi_1\circ \varphi^{-1})[B^{i_\alpha}_\alpha] \subseteq \gamma < \alpha$. Then, since $\alpha \in \acc(C \cap D)$, there exists $\beta \in C \cap D$ such that $\gamma < \beta < \alpha$. Now, since $\beta \in D$, we have $(\pi_2\circ\varphi^{-1})[B^{i_\alpha}_\alpha]\subseteq\beta$, so $\varphi^{-1}[B^{i_\alpha}_\alpha]\subseteq\beta\times\beta$ and since $\beta \in C$ we have $(\varphi \circ \varphi^{-1}) [B^{i_\alpha}_\alpha] = B^{i_\alpha}_\alpha \subseteq \beta < \alpha$ a contradiction to the fact that $B^{i_\alpha}_\alpha$ is cofinal in $\alpha$. So, $ g^{i_\alpha}_\alpha = \varphi^{-1}[B^{i_\alpha}_\alpha] \subseteq f $ and, since $\sup(\dom(g^{i_\alpha}_\alpha))=\alpha$, we have indeed  $\sup\{\beta \in \dom(g^{i_\alpha}_\alpha) \mid g^{i_\alpha}_\alpha(\beta) = f(\beta) \}  =\alpha$.  
\end{proof}

Finally, we define some more notation. The notation $\acc(X)$ stands for the set of accumulation points of a set $X$ and by ``$\Phi\restriction \restriction a$'' we mean the restriction of all the functions in the family $\Phi$ to the domain $a$, and by  $\Phi {}^\frown \langle0\rangle$'' we mean the end-extension of a family $\Phi$ by the constant $0$ function.

\section{The case $\mathfrak{d}=\mathfrak{c}=\aleph_2$}

The following theorem brings us closer to an affirmative answer to \cite{WITHOUTLC}[Question 7.3], asking whether $2^{\aleph_0} \leq \aleph_2$ implies either $\lim^1 \mathbf{A} \neq 0$ or $\lim^2 \mathbf{A} \neq 0$. 

\begin{theorem}
$\mathfrak{d}=\mathfrak{c}=\aleph_2+\clubsuit(S_1^2) \Rightarrow \lim^2 \mathbf{A} \neq 0$.

\end{theorem}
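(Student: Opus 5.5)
We will build a nontrivial coherent $2$-family indexed over a $\subseteq^*$-cofinal subfamily of $\mcI$, and then use that vanishing is invariant under cofinal subsystems (Lemma~\ref{lemma_extendtriv}). Since $\mathfrak{d}=\aleph_2$, we can fix a $\leq^*$-cofinal family $\{f_\alpha \mid \alpha<\omega_2\}$. Replacing each $f_\alpha$ by a dominating bound, we may take it $\leq^*$-increasing along a suitable enumeration, except at limit stages, where the bound is chosen to dominate only countably many earlier terms. Writing $I_\alpha = I_{f_\alpha}$, the plan is to define $\varphi_{\alpha\beta}: I_\alpha\cap I_\beta\to\Z$ for $\alpha<\beta<\omega_2$ by recursion on $\beta$. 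We set $\varphi_{\beta\alpha}=-\varphi_{\alpha\beta}$ and $\varphi_{\alpha\alpha}=0$. The inductive hypothesis is that $\Phi_\beta=\langle\varphi_{\alpha\gamma}\mid \alpha,\gamma<\beta\rangle$ is coherent.

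\textbf{Extending at stage $\beta$.} We need a trivialization $\langle\psi_\alpha\mid\alpha<\beta\rangle$ of $\Phi_\beta\restriction\restriction I_\beta$, and we then set $\varphi_{\alpha\beta}=\psi_\alpha$. Since $\beta<\omega_2$, the restricted family $\Phi_\beta\restriction\restriction I_\beta$ lives on the sets $I_\alpha\cap I_\beta$, $\alpha<\beta$. This indexing family has $\subseteq^*$-cofinality at most $\aleph_1$, so Lemma~\ref{lemma_goblot} with $n=2$ says it is trivial. Thus the construction never gets stuck. The freedom lies in the choice of trivialization: two trivializations differ by a coherent $1$-family on $\{I_\alpha\cap I_\beta\}$. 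By Lemma~\ref{cardinality_lemma}, at stages of cofinality $\omega_1$ (where we arrange that a cofinal $\omega_1$-chain governs the restrictions) there are $2^{\aleph_1}$ pairwise nontrivially different choices. Since $\mathfrak{c}=\aleph_2$, there are more choices than we need in order to diagonalize.

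\textbf{Diagonalizing.} There are $\mathfrak{c}^{\aleph_1}$ possible $1$-families, which may exceed $\aleph_2$, so we cannot list all potential trivializations of the final family. This is where $\clubsuit(S^2_1)$ replaces diamond. Suppose the final $\Phi$ is trivialized by $\Psi=\langle\psi_\alpha\mid\alpha<\omega_2\rangle$. Each $\psi_\alpha$ is a function from $I_\alpha$ to $\Z$, so it is coded by a real, and $\mathfrak{c}=\aleph_2$ lets us code it by an ordinal $e(\psi_\alpha)<\omega_2$. The map $\alpha\mapsto e(\psi_\alpha)$ is then a function $\omega_2\to\omega_2$. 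By the theorem above, $\clubsuit(S^2_1)$ implies $\clubsuit_\rightharpoonup(S^2_1)$, and in fact a single guess $g_\beta$ per point suffices. So for stationarily many $\beta\in S^2_1$, the guess $g_\beta$ agrees with $\alpha\mapsto e(\psi_\alpha)$ on a set cofinal in $\beta$. That set has order type $\omega_1$ after thinning. At stage $\beta$ we read off the partial family $\langle\psi_\alpha\mid\alpha\in\dom(g_\beta)\text{ agreeing}\rangle$, which is an $\omega_1$-sized cofinal piece. We then choose $\varphi_{\cdot\beta}$ so that the $1$-family $\langle\varphi_{\alpha\beta}-\psi_\alpha \mid \alpha \in A\rangle$ is nontrivial for a cofinal $A\subseteq\dom(g_\beta)$ of type $\omega_1$. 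This is possible because the set of bad choices is a single coset and Lemma~\ref{cardinality_lemma} gives at least $2$ (indeed $2^{\aleph_1}$) classes. Note that we do not know which elements of $\dom(g_\beta)$ are correct guesses. We therefore handle this by considering the $\subseteq^*$-cofinal suborder and using the fact that the genuine trivialization restricted to any cofinal subset would force $\psi_\beta$ to trivialize the difference.

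\textbf{Contradiction and main obstacle.} If $\Psi$ trivializes $\Phi$, then $\varphi_{\alpha\beta}=^*\psi_\beta-\psi_\alpha$ for all $\alpha < \beta$, so $\psi_\beta\restriction\restriction$ trivializes $\langle\varphi_{\alpha\beta}-\psi_\alpha\rangle$ on a guessed $\beta$. This contradicts the choice made at that stage. The main obstacle is the partial-guessing mismatch: $g_\beta$ is correct only on an unknown cofinal subset. I will handle it by requiring nontriviality on \emph{every} cofinal subset of $\dom(g_\beta)$. Along an $\omega_1$-chain, triviality on a cofinal subset is equivalent to triviality on the whole chain, because of coherence. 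So it suffices to kill triviality on all of $\dom(g_\beta)$, interpreting $g_\beta$-values as functions on $I_\alpha\cap I_\beta$ via the coherence of the guessed family. Making that interpretation coherent when some guesses are wrong is the delicate step. My first attempt is to pass to the maximal initial cofinal subset on which the decoded functions cohere mod finite.
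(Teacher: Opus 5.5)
Your overall architecture matches the paper's. You recurse along a cofinal $\omega_2$-sequence, use Lemma~\ref{lemma_goblot} to extend at every stage, and at a $\clubsuit$-guessed $\beta\in S^2_1$ choose the new column outside the one bad coset determined by the guess. The step that produces a second coset, however, is not justified.

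\textbf{The missing second coset.} At a guessed $\beta$ you need a coherent $1$-family $\theta$ on \emph{all} of $\{I_\alpha\mid\alpha<\beta\}$, since a new column must trivialize all of $\Phi_\beta\restriction\restriction I_\beta$. It must also be such that $\langle\theta_\alpha\restriction(I_\alpha\cap I_\beta)\mid\alpha\in\dom(g_\beta)\rangle$ has no trivialization $\psi\colon I_\beta\to\Z$. Lemma~\ref{cardinality_lemma} is a statement about chains. It supplies such a $\theta$ when $\langle I_\alpha\mid\alpha<\beta\rangle$ is strictly $\subseteq^*$-increasing and lies $\subseteq^*$-below $I_\beta$; then $\dom(g_\beta)$ is automatically cofinal in it. Otherwise it says nothing.

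The theorem assumes only $\mathfrak d=\mathfrak c=\aleph_2$, so $\mathfrak b=\aleph_1$ is allowed. In that case no cofinal $\omega_2$-sequence is $\leq^*$-increasing below unboundedly many $\beta$, since it would then be a cofinal chain and force $\mathfrak b=\aleph_2$. Your ``increasing except at limits'' enumeration is indeed not a chain. Even if some $\omega_1$-chain were cofinal in $\{f_\alpha\mid\alpha<\beta\}$, $\dom(g_\beta)$ is handed to you by the $\clubsuit$-sequence. It is cofinal in $\beta$ only as a set of ordinals, not $\leq^*$-cofinal, so nontriviality on that chain says nothing about $\dom(g_\beta)$. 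Moreover, a nontrivial family on a non-cofinal subfamily need not extend to all of $\beta$.

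Worse, nothing in your setup rules out $I_\beta\subseteq^* I_{\alpha_0}$ for some $\alpha_0<\beta$. In that case $\theta_{\alpha_0}\restriction I_\beta$ trivializes every such restriction, so all admissible columns lie in a single class and no choice helps.

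\textbf{How the paper closes this gap.} Its Claim builds $\langle f_\alpha\rangle$ with two properties: $f_\gamma\not\leq^* f_\alpha$, and $I_{f_\gamma}\cap(I_{f_\beta}\setminus I_{f_\alpha})$ is infinite for all $\alpha<\beta<\gamma$. Then, at guessed $\gamma\in S^2_1$ where $\mathcal F\restriction\gamma$ is directed, it runs an $\omega_1$-recursion. The recursion interleaves an enumeration of $\gamma$ with points $\beta_{\gamma,\delta}\in\dom(G_\gamma)$. At each step it picks an infinite $X_{\gamma,\delta}\subseteq I_{f_\gamma}\cap I_{f_{\beta_{\gamma,\delta}}}$ almost disjoint from the countably many earlier domains, and writes the value $s(\delta)$ there, for each $s\in{}^{\omega_1}2$. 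No single $\psi\colon I_{f_\gamma}\to\Z$ trivializes two of the resulting families. None of them is shown nontrivial outright, though. So the paper needs $2^{\aleph_1}>2^{\aleph_0}$ to find a nontrivial one. The remaining case $2^{\aleph_1}=\aleph_2$ is handled because $\clubsuit(S^2_1)$ then gives $\lozenge(S^2_1)$, and Bannister's theorem applies. Your single-coset observation does not bypass this counting, because it still requires one family whose restriction is nontrivial.

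\textbf{The partial-guessing worry.} The ``partial-guessing mismatch'' you flag is a non-issue. Take the proof of $\clubsuit^{\leq\overline\mu}(S)\Rightarrow\clubsuit^{\leq\overline\mu}_\rightharpoonup(S)$ with one guess per point. It actually yields $g_\beta\subseteq F$ on a stationary set, and the paper uses exactly this. It acts only when the decoded $G_\gamma$ trivializes $\Phi\restriction\dom(G_\gamma)$ and extends to a trivialization of $\Phi\restriction(\mathcal F\restriction\gamma)$. Your proposed ``maximal initial cofinal subset'' fix is therefore unnecessary. It would also fail here, because it relies on the chain-only fact that triviality on a cofinal subset propagates to the whole family.

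With exact guessing and the extra hypothesis $\mathfrak b=\mathfrak d=\aleph_2$, your argument does go through. Use a strictly $\leq^*$-increasing cofinal sequence with infinite differences. The argument is then essentially the chain argument of Section~\ref{sect5}, where Lemma~\ref{cardinality_lemma} plus the coset observation suffice with no assumption on $2^{\aleph_1}$. That is, however, a weaker theorem than the one stated.
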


\begin{proof}
We introduce a construction that uses some of the properties of the notion of \emph{ascending sequence} from \cite{CASAROSA2026111076}.

\begin{claim}
If $\mathfrak{d}= \aleph_2$, then there exists a sequence of functions $\mathcal{F} = \langle f_\alpha \mid \alpha< \omega_2 \rangle$ cofinal in $({}^\omega \omega, \leq^*)$ and such that:

\begin{enumerate}
    \item For every $\alpha < \gamma$, $f_\gamma \not \leq^*  f_\alpha$.

    \item For every $\alpha < \beta < \gamma$, letting $e_{\alpha, \beta} = I_{f_\beta} \setminus I_{f_\alpha}$, the set  $I_{f_\gamma} \cap e_{\alpha, \beta}$ is infinite. 
\end{enumerate}
\end{claim}    

\begin{proof}[Proof of Claim]

We construct $\mathcal{F}$ recursively. For $\gamma < \omega_2$, we pick the $\gamma$-th element $f_{\gamma, 0}$ of some cofinal set. Then we observe that $\vert \mathcal{F} \restriction \gamma \vert<\aleph_2$. So, we can find $f_{\gamma, 1}$ not dominated by any of these functions. Assuming recursively that condition $(1)$ is satisfied below $\gamma$, for every $\alpha < \beta < \gamma$, we have $\vert e_{\alpha, \beta} \vert = \aleph_0$. Moreover, since $\vert \langle e_{\alpha, \beta} \mid \alpha < \beta < \gamma \rangle \vert < \aleph_2$, we can consider the projection to the first coordinate $\pi$ and the step functions $h_{\alpha, \beta}$ taking value $f_\alpha(\pi(e_{\alpha, \beta})(n))$ on the interval $(\pi (e_{\alpha, \beta}(n-1)), \pi(e_{\alpha, \beta}(n))]$ (we are using function notation to denote the $n$-th element in the increasing enumeration of a set). Then we find a single weakly increasing $f_{\gamma, 2} \not \leq^* h_{\alpha, \beta}$ for all $\alpha < \beta < \gamma$. Since it is weakly increasing, it will contain the right endpoint $e_{\alpha, \beta}(n)$ of the infinitely many intervals where it is not pointwise dominated by $h_{\alpha, \beta}$. Finally, let $f_\gamma = f_{\gamma, 0} \vee f_{\gamma, 1} \vee f_{\gamma, 2} $. This is cofinal and satisfies conditions $(1)$ and $(2)$. \end{proof}

Observe that, for a club $D \subseteq \omega_2$, $\gamma \in D$ implies that $\mathcal{F}\restriction\gamma$ is $\leq^*$-directed.  

Note that, since $\frc = \aleph_2$, we can take each  $\alpha < \omega_2$ to code a different element of ${}^{\omega \times \omega} \mathbb{Z}$, so that, once these functions are restricted to the various $I_{f_\xi}$, whenever $G_\alpha$ is a partial function from $\alpha$ to $\alpha$ it can be read off as a $1$-dimensional family of functions $\langle \psi_\xi: I_{f_\xi} \rightarrow \mathbb{Z} \mid \xi \in \dom(G_\alpha) \rangle$.

We build our coherent nontrivial $2$-family $\Phi$ on $\mathcal{F}$ by recursion. Suppose $\Phi \restriction (\mathcal{F}\restriction \gamma)$ has already been defined. If not all of the following conditions hold, we simply trivialize $\Phi \restriction (\mathcal{F} \restriction \gamma)$ using \cite{CASAROSA2026111076}[Proposition 2.7] and extend arbitrarily to a coherent $\Phi \restriction (\mathcal{F} \restriction\gamma+1)$.

If, however, $\gamma \in S_1^2 \cap D$, $G_\gamma$ is a partial function such that $\sup (\dom(G_\gamma))=\gamma$, the family coded by $G_\gamma$ trivializes $\Phi \restriction\dom(G_\gamma)$, and this trivialization extends to a trivialization $\Upsilon^\gamma$ of  $\Phi \restriction(\mathcal{F}\restriction\gamma)$, then we ensure that $G_\gamma$ does not extend to a trivialization of $\Phi \restriction ( \mathcal{F} \restriction \gamma+1)$. To this end, we want to define a coherent $1$-family $\Theta^\gamma$ over $\mathcal{F}\restriction\gamma$, such that $(\Theta^\gamma \restriction \dom(G_\gamma))\restriction \restriction I_{f_\gamma}$ is nontrivial. 

For each function $s \in {}^{\omega_1} 2$ we produce a coherent family $\Theta^{\gamma, s}$ on $\mathcal{F}\restriction \gamma$ as follows.  
Consider a countable set $(\delta_ n)_{n< \omega} \subseteq \gamma$, and let $\hat{f}_{\delta_{ n}}= \bigvee_{i=0}^n f_{\delta_{i}}$. Note that since $\gamma \in D$, for each of these there is a corresponding $\alpha_{n} < \gamma$ such that  $\hat{f}_{\delta_{n}} \leq^* f_{\alpha_{n}}$. Now take a $\clubsuit(S_1^2)$-sequence, which yields a sequence $\langle G_\alpha \mid \alpha \in S_1^2 \rangle$ of partial functions with cofinal domain in $\alpha$. Then we can find $\alpha =\sup_n \alpha_{n} < \beta \in \dom(G_\gamma)$, and for this $I_{f_\gamma} \cap e_{\alpha_n, \beta}$ is infinite. We can therefore pick $x_{n} \in I_{f_\gamma} \cap I_{f_{\beta}} \setminus I_{\hat{f}_{\delta_n}}$, so that $X = \{ x_{n}   \}_{n < \omega} \subseteq I_{f_\gamma} \cap I_{f_{\beta}}$ is almost disjoint from every $I_{f_{\delta_n}}$.  

Now let $\iota_\gamma : \gamma \to \omega_1 $ be an injection (and a bijection whenever $\vert \gamma \vert = \aleph_1$). Assume we have already defined a coherent $\Theta^{\gamma, s} \restriction \iota_\gamma^{-1} [\delta] \cup \{ \beta_{\gamma, \xi}  \}_{\xi< \delta} $ for some $\delta< \omega_1$. Since this family is countable, applying \cite{CASAROSA2026111076}[Proposition 2.7] we find a trivialization $\psi: \omega\times \omega \rightarrow \mathbb{Z}$. Then we find $\beta_{\gamma, \delta} \in \dom(G_\gamma)$ as above where the countable set $(\delta_{n})_{n< \omega}$ is an enumeration of $\iota_\gamma^{-1} [\delta] \cup \{ \beta_{\gamma, \xi}  \}_{\xi< \delta}$. Then one uses the trivialization $\psi$ to define a coherent extension $\Theta'_{\gamma, s\restriction\delta}$ of our family to $\iota_\gamma^{-1}[\delta+1] \cup \{\beta_{\gamma, \xi}  \}_{\xi \leq \delta} $. Finally, we define $\Theta^{\gamma, s} \restriction\iota_\gamma^{-1}[\delta+1] \cup \{\beta_{\gamma, \xi}  \}_{\xi \leq \delta}  $ by taking  $\Theta'_{\gamma, s\restriction\delta}$ and changing the function with domain $I_{f_{\beta_{\gamma, \delta}}}$ to take constant value $s(\delta)$ over the set $X_{\gamma, \delta} \subseteq I_{f_{\beta_{\gamma, \delta}}} \cap I_{f_\gamma}$ defined as the $X$ from above. This preserves coherence since $X_{\gamma, \delta}$ is almost disjoint from all the other domains. 

Now observe that for any two different $s, s' \in {}^{\omega_1}2$, the same $\psi_\gamma: I_{f_\gamma} \rightarrow \mathbb{Z}$ cannot trivialize both $\Theta^{\gamma, s}$ and $\Theta^{\gamma, s'}$. This is because if $s(\delta) \neq s'(\delta)$ then their functions with domain $I_{f_{\beta_{\gamma, \delta}}}$ will differ among themselves on every point of the infinite set $X_{\gamma, \delta} \subseteq I_{f_\gamma}$, so that no $\psi_\gamma$ can coincide with both modulo finite. 

Now observe that since $2^{\aleph_1} = \aleph_2+\clubsuit(S_1^2) \Leftrightarrow \lozenge(S_1^2)$, we can assume $2^{\aleph_1} > \aleph_2 = 2^{\aleph_0}$, otherwise $\lozenge(S_1^2)$ holds and we fall back into the hypotheses of \cite{BANNISTER_2025}[Theorem 3.1].

But then there must be an $s \in {}^{\omega_1}2$ such that $\Theta^{\gamma, s}\restriction (\dom(G_\gamma))\restriction \restriction I_{f_\gamma} $ is nontrivial, because there are only $2^{\aleph_0}$-many possibilities for $\psi_\gamma$ and $2^{\aleph_1}$-many families constructed as above.  

For such an $s$ we let $\Theta^\gamma = \Theta^{\gamma, s}$ and $\Phi\restriction (\mathcal{C \restriction}{\gamma+1}) = \partial((\Upsilon^\gamma+\Theta^\gamma) ^\frown\langle0\rangle)$.

We now show that $G_\gamma$ does not extend to a trivialization $\Upsilon^*$ of $ \Phi \restriction \gamma+1$. If it did, then for $\nu \in \dom(G_\gamma)$ we have 

\[ G_\gamma(\nu) -\Upsilon^*_{f_\gamma}  =^* \Phi_{f_\gamma, f_\nu}  =^* G_\gamma(\nu) + \Theta^\gamma_{f_\nu}
\]

on the common domain $I_{f_\nu} \cap I_{f_\gamma}$. But then, subtracting $G_\gamma(\nu)$ from both sides, we get $ -\Upsilon^*_{f_\gamma} \restriction I_{f_\nu} \cap I_{f_\gamma}  =^* \Theta_{f_\nu} \restriction I_{f_\nu} \cap I_{f_\gamma} $, but then $-\Upsilon^*_{f_\gamma}$ would trivialize $(\Theta \restriction \dom(G_\gamma)) \restriction \restriction I_{f_\gamma} $, a contradiction.

Finally, code any putative trivialization $\Psi$ of $\Phi$ as some $F: \omega_2 \rightarrow \omega_2$. Then for stationarily many  $\gamma \in S_1^2 \cap D$, we have $G_\gamma \subseteq F$. Now, if $G_\gamma$ did not extend to a trivialization of $\Phi \restriction \gamma$ that would be a contradiction, but even if it does, it cannot further extend to a trivialization of $\Phi\restriction (\mathcal{F} \restriction \gamma+1)$, again a contradiction. 
\end{proof}

\section{Long chains and squares}
\label{sect5}

\begin{definition}

Suppose that $\lambda$ is a regular uncountable cardinal and $S \subseteq \lambda$ is stationary. The principle $\square(\lambda, S)$ asserts the existence of a sequence $\vec{C} = \langle C_\alpha \mid \alpha < \lambda \rangle$ such that: 

\begin{enumerate}

\item for every limit ordinal $\alpha <\lambda$, $C_\alpha$ is a club in $\alpha$;

\item for all limit ordinals $\alpha < \beta < \lambda$, if $\alpha \in \acc(C_\beta)$, then $C_\beta \cap \alpha = C_\alpha$;

\item for every limit ordinal $\alpha < \lambda$, $C_\alpha \cap S = \emptyset$.

\end{enumerate}
Such a sequence is called a $\square(\lambda, S)$-sequence.
\end{definition}

The following generalizes and adapts a construction that appears in \cite{CASAROSA2026111076}[Lemma 5.3] in reference to chains in $({}^\omega \omega, \leq^*)$, as a way to obtain simultaneously nonvanishing derived limits. The point is ensuring coherence through the principle $\square(\lambda, S)$ at steps where Goblot's theorem is not applicable. 

\begin{theorem}
Suppose that $\mathfrak{c}=\lambda > \omega_1$ for some regular uncountable $\lambda$, and $\clubsuit^{<2^{\aleph_1}}_\rightharpoonup(S)$ and $\square(\lambda,S)$  both hold for some stationary set $S \subseteq S^\lambda_{\omega_1}$.
Then, for every strictly increasing chain $\mathcal{C} = \langle a_\delta \mid \delta \in \lambda \rangle$ in $\mathcal{P}(\omega)/\mathrm{fin}$, we have
\[
\lim\nolimits^2 \A_{\mathcal C}\neq0.
\]

\end{theorem}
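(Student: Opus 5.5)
We build a coherent nontrivial $2$-family $\Phi$ over $\mathcal{C}$ by recursion on $\gamma<\lambda$, combining the diagonalization from the proof of the previous theorem with a $\square(\lambda,S)$-guided coherence argument in the style of \cite{CASAROSA2026111076}[Lemma 5.3]. Since $\mathfrak{c}=\lambda$, fix a coding so that each $\xi<\lambda$ names a function $\omega\to\mathbb{Z}$. Then any partial function $g$ from $\gamma$ to $\gamma$ is read as a $1$-family $\langle \psi_\xi : a_\xi\to\mathbb{Z}\mid \xi\in\dom(g)\rangle$. Fix a $\clubsuit^{<2^{\aleph_1}}_\rightharpoonup(S)$-sequence $\langle\mathcal{F}_\gamma\mid\gamma\in S\rangle$ and a $\square(\lambda,S)$-sequence $\vec C$.

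We maintain the following inductive hypothesis. For every limit $\gamma$, the restriction $\Phi\restriction(\mathcal{C}\restriction\gamma)$ is trivialized by an explicit $1$-family $\Upsilon^\gamma$, and the trivializations cohere along the square: $\Upsilon^\alpha=\Upsilon^\gamma\restriction\alpha$ whenever $\alpha\in\acc(C_\gamma)$.

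At successor steps, and at limits of countable cofinality, we extend coherently. At limits $\gamma\notin S$, we take $\Upsilon^\gamma$ to be the union of the $\Upsilon^\alpha$ for $\alpha\in\acc(C_\gamma)$. These agree by the coherence clause of $\square$. We then fill the gaps between consecutive points of $C_\gamma$ using Lemma \ref{lemma_goblot}, since those intervals are chains of smaller size, and extend using Lemma \ref{lemma_extendtriv}. The clause $C_\gamma\cap S=\emptyset$ guarantees that these unions never pass through a point of $S$ where we have deliberately destroyed trivializations.

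This is the step I expect to be the main obstacle. The difficulty is arranging that the new column $\Phi_{\cdot,\gamma}$ stays coherent with all earlier columns while $\Upsilon^\gamma$ still extends every $\Upsilon^\alpha$ with $\alpha\in\acc(C_\gamma)$. I would first try to set $\Phi\restriction(\gamma+1)=\partial((\Upsilon^\gamma+\Theta^\gamma)^\frown\langle 0\rangle)$ as before, with $\Theta^\gamma=0$ at non-$S$ points. For points of $S$, I would verify that the modification is absorbed without breaking the threads through later clubs.

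At $\gamma\in S$ we have $\cf(\gamma)=\omega_1$ and $|\mathcal{F}_\gamma|<2^{\aleph_1}$. We fix a cofinal $\omega_1$-sequence in $\gamma$ and produce, for each $s\in{}^{\omega_1}2$, a coherent $1$-family $\Theta^{\gamma,s}$ exactly as in the previous proof. This goes by recursion on $\delta<\omega_1$: Lemma \ref{lemma_goblot} trivializes countable pieces, and we choose infinite sets $X_{\gamma,\delta}\subseteq a_{\beta_{\gamma,\delta}}$ that are almost disjoint from the previously handled sets. This is possible because the chain is strictly increasing, so $a_{\beta}\setminus a_{\alpha}$ is infinite for $\alpha<\beta$. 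Since $a_\gamma$ is $\subseteq^*$-above all earlier $a_\xi$, there is no need for the analogue of the set $I_{f_\gamma}$, which makes the construction simpler than in the $\mathfrak{d}$ case.

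Distinct $s$ give families whose differences $\Theta^{\gamma,s}-\Theta^{\gamma,s'}$ are nontrivial on any unbounded subset of the $\beta_{\gamma,\delta}$'s. Moreover, each $g\in\mathcal{F}_\gamma$ together with $\Upsilon^\gamma$ rules out at most one $s$. As $|\mathcal{F}_\gamma|<2^{\aleph_1}$, some $s$ defeats every guess $g\in\mathcal{F}_\gamma$: no $g$ that agrees with $\Upsilon^\gamma$ on an unbounded set extends to a trivialization of $\Phi\restriction(\gamma+1)$. The comparison of the columns on $a_\nu\cap a_\gamma$ is the same as at the end of the previous proof. Here I would need the guessing to be on a set of $\beta_{\gamma,\delta}$'s. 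To arrange this, I would choose the $\beta_{\gamma,\delta}$ within the agreement sets, or argue that agreement on an unbounded set determines the trivialization on $\gamma$ up to a function on $a_\gamma$ by coherence.

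Finally, suppose $\Psi$ trivializes $\Phi$, and code it as $f:\lambda\to\lambda$. Restricting to the club of closure points where $\Psi\restriction\gamma$ and $\Upsilon^\gamma$ differ by a single function, the guessing principle gives $\gamma\in S$ and $g\in\mathcal{F}_\gamma$ that agrees with $f$ on an unbounded set. This contradicts the choice of $\Theta^\gamma$.
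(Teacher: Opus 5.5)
\textbf{The gap is the counting at $\gamma\in S$.} Building $\Theta^{\gamma,s}$ ``exactly as in the previous proof'' only guarantees that no single function $h\colon\omega\to\Z$ trivializes two distinct $\Theta^{\gamma,s}$, because they disagree on the infinite sets $X_{\gamma,\delta}$. It does not make $\Theta^{\gamma,s}-\Theta^{\gamma,s'}$ nontrivial. For instance, if $s,s'$ differ at a single $\delta$, then for suitable choices of the intermediate trivializations in the recursion the difference is trivialized by $\pm\chi_{X_{\gamma,\delta}}$.

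So your claim that each $g\in\mathcal F_\gamma$ rules out at most one $s$ is unsupported. What the construction does give is that a fixed $g$ excludes at most one $s$ for each possible value $h\in{}^{\omega}\Z$ of the new coordinate of an extending trivialization. That is up to $|\mathcal F_\gamma|\cdot 2^{\aleph_0}$ excluded values of $s$, which is below $2^{\aleph_1}$ only if $2^{\aleph_0}<2^{\aleph_1}$. The previous theorem secured that inequality by sending the case $2^{\aleph_1}=\aleph_2$ to $\lozenge(S^2_1)$ and Bannister's theorem. Here nothing excludes $\mathfrak c=\lambda=2^{\aleph_1}$.

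The missing ingredient is Lemma \ref{cardinality_lemma} (Farah, Prasolov): $\lim^1\mathbf{A}_{\mathcal C\restriction\gamma}$ has size $2^{\aleph_1}$. Hence the trivializations of $\Phi\restriction\gamma$ split into $2^{\aleph_1}$ classes modulo trivial $1$-families. Each guessed family, extended by Lemma \ref{lemma_extendtriv}, lies in a single class. Since $|\mathcal F_\gamma|<2^{\aleph_1}$, you can choose $\Upsilon^\gamma+\Theta^\gamma$ outside all of them; this is the paper's Case 5. Producing $2^{\aleph_1}$ pairwise inequivalent coherent families is Farah's theorem and does not fall out of the naive binary recursion.

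The rest of your plan is sound and close to the paper.
\begin{itemize}
\item Your invariant (square-coherent trivializations, with column $\partial({\Upsilon^\gamma}^\frown\langle0\rangle)$ at limits outside $S$) is a legitimate alternative to the paper's $(\dagger)$. In fact it implies $(\dagger)$: for $\beta\in\acc(C_\gamma)$ and $\alpha<\beta$, both $\phi_{\alpha,\beta}$ and $\phi_{\alpha,\gamma}$ equal $-\upsilon^\gamma_\alpha$ on their domains. The paper instead keeps only mod-finite agreement of columns and defines them directly by copying $\phi_{\alpha,\gamma(\alpha)}$.
\item The step you flag as the main obstacle is not one. A coboundary is automatically coherent, and when $\acc(C_\gamma)$ is unbounded the union of the $\Upsilon^\alpha$ already trivializes $\Phi\restriction\gamma$.
\item The only case needing an argument is $\cf(\gamma)=\omega$ with $\alpha_0=\max\acc(C_\gamma)<\gamma$. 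There Lemma \ref{lemma_extendtriv} does not apply, since the set is not cofinal. However, ${\Upsilon^{\alpha_0}}^\frown\langle0\rangle$ trivializes $\Phi\restriction(\alpha_0+1)$ because $\alpha_0\notin S$. Comparing it with any trivialization of $\Phi\restriction\gamma$ from Lemma \ref{lemma_goblot} and correcting by a single function gives the extension.
\item Of your two fixes for the agreement set, the first is unavailable, since the agreement set depends on the final $f$. The second is what the paper does: a coherent $1$-family over $\mathcal C\restriction\gamma$ is trivial iff its restriction to some unbounded subset is.
\item Drop the ``club where $\Psi\restriction\gamma$ and $\Upsilon^\gamma$ differ by a single function''. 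On $S$ this happens exactly when $\Theta^\gamma$ is trivial, so it is not a club in general. The guessing stationary set alone yields the contradiction.
\end{itemize}
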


\begin{proof}
We want to recursively build a $2$-coherent family $\Phi$ on $\mathcal{C}$. To simplify notation, we will write ``$\restriction \delta"$ rather than ``$\restriction (\mathcal{C}\restriction \delta)$''.

Let $\langle \mathcal{F}_\alpha \mid \alpha \in S \rangle$ be a $\clubsuit^{<2^{\aleph_1}}_\rightharpoonup(S)$-sequence.
Let moreover $\langle C_\delta \mid \delta < \lambda \rangle$ be a $\square(\lambda, S)$-sequence. We will use the latter to recursively preserve the following condition: 

\[  (\dagger) \,\,\, \forall \gamma\in \acc(\delta+1) \setminus S \,\, \forall \beta \in \acc(C_\gamma) \forall \alpha< \beta [ \phi_{\alpha, \beta} =^* \phi_{\alpha, \gamma}     ]                \]


Suppose we have defined $\Phi \restriction \delta$ and we want to define $\Phi \restriction\delta+1$. We will consider five cases:

\begin{itemize}
    \item \textbf{Case 1}: $\delta$ is a successor ordinal. 

In this case we trivialize with Goblot and extend to $\delta$ in an arbitrary way.

    \item \textbf{Case 2}: $\delta \in \acc(\lambda) \setminus S$ and $\delta > \sup(\acc(C_\delta))$. 
    
Then $\gamma \defeq \sup(\acc(C_\delta)) = \max(\acc(C_\delta))$ and $\mathrm{otp}(C_\delta \setminus \gamma)= \omega$ and $\mathrm{cf}(\delta)=\omega$. Thanks to Goblot, we find a trivialization $\Psi^\delta$. 

Now fix $\alpha< \delta$. If $\gamma = \alpha$, then let $\phi_{\alpha, \delta} =0$. Let $h= 0$ if $\alpha< \gamma$ and $\gamma < \alpha$. Define $\phi_{\alpha, \delta}: a_\alpha \cap a_\delta \rightarrow \mathbb{Z}$ as

\[ \phi_{\alpha, \delta} (x) = \begin{cases}
    (-1)^h \phi_{ \{\alpha, \gamma\} } (x) \text{ if } x \in a_\gamma \\

    -\psi_\alpha (x) \text{ if } x \not \in a_\gamma.
    
\end{cases}                       \]

First, we verify that $(\dagger)$ is preserved. For this, we need only to check that $\forall \beta \in \acc(C_\delta) \text{ } \forall \alpha < \beta \text{ } \phi_{\alpha, \beta} =^* \phi_{\alpha, \delta}$. If $\beta = \gamma$ then this is definitional. If $\beta \in \acc(C_\delta) \cap \gamma = \acc(C_\gamma)$ instead, then the induction hypothesis for $(\dagger)$ implies that $\phi_{\alpha, \beta} =^* \phi_{\alpha, \gamma}$. Since $a_\beta \subseteq^* a_\gamma \subseteq^* a_\delta$, this in turn implies that $\phi_{\alpha, \beta} =^* \phi_{\alpha, \delta}$, as desired. 

Next we verify coherence. We want to show that $\partial(\Phi)_{\alpha_0, \alpha_1, \delta} = \phi_{\alpha_1, \delta} - \phi_{\alpha_0, \delta} + \phi_{\alpha_0, \alpha_1} =^* 0$. If $\gamma \in \{ \alpha_0, \alpha_1\}$, then we only need to care about the domain $a_\gamma$, and our definitions yield either $\partial(\Phi)_{\alpha_0, \alpha_1, \delta} = \phi_{\alpha_1, \delta} - 0 - \phi_{\alpha_1, \delta}$ or $\partial(\Phi)_{\alpha_0, \alpha_1, \delta} = 0 - \phi_{\alpha_0, \delta}+ \phi_{\alpha_0, \delta}$. If $\gamma \not \in \{\alpha_0, \alpha_1 \}$, then, by the same definition, we have $\partial(\Phi)_{\alpha_0, \alpha_1, \delta} = \pm \partial(\Phi)_{\alpha_0, \alpha_1, \gamma} =^*0$ on $a_\gamma$ and $-\psi_{\alpha_1}-(-\psi_{\alpha_0}) + \phi_{\alpha_0, \alpha_1} =^* 0$ outside of $a_\gamma$.  

    \item \textbf{Case 3}: $\delta \in \acc(\lambda) \setminus S$ and $\sup(\acc(C_\delta))=\delta$.
    
For $\alpha< \delta$, let $\gamma(\alpha) = \min(\acc(C_\delta) \setminus (\alpha+1))$. Then, for all $x \in a_\alpha \cap a_\delta $, set 

\[  \phi_{\alpha, \delta} (x) = \begin{cases}
\phi_{\alpha, \gamma(\alpha)} (x) \text{ if } x \in a_{\gamma(\alpha)}
    \\
0 \text{            otherwise.}   
\end{cases}                           \]

Note that the ``otherwise'' case of the above definition only occurs for finitely many $x \in \dom(\phi_{\alpha, \delta})$. First we verify $(\dagger)$. For all $\beta \in \acc(C_\delta)$ and $\alpha < \beta$ we have $\phi_{\alpha, \delta} =^* \phi_{\alpha, \gamma(\alpha)} =^* \phi_{\alpha, \beta}$ where the second equality holds by the induction assumption since either $\gamma(\alpha)=\beta$ or $\gamma(\alpha)\in \acc(C_\beta)= \acc(C_\delta) \cap \beta$. Since moreover $a_\gamma(\alpha) \subseteq^* a_\beta \subseteq^* a_\delta$, we have $\phi_{\alpha, \delta} =^* \phi_{\alpha, \beta}$, as desired.

For coherence, let $\alpha_0< \alpha_1 < \delta$ and let $\gamma = \gamma(\alpha_1)$. Then as before $\partial(\Phi)_{\alpha_0, \alpha_1, \delta} = \partial(\Phi)_{\alpha_0, \alpha_1, \gamma} =^*0$.

    \item \textbf{Case 4}: $\delta \in S$ and not all of the conditions of Case $5$ below hold. 

Then there is nothing to check with respect to $(\dagger)$, and we proceed as in Case $3$.

    \item \textbf{Case 5}: $\delta \in S$ there exists some $g \in \mathcal{F}_\delta$ that reads as  $ T =\langle \tau_\xi: a_\xi \to \mathbb{Z} \mid \xi \in \dom(g) \rangle $ trivializing $\Phi \restriction \dom(g)$. 
    
     Note that there are at most $\vert \mathcal F_\delta \vert < 2^{\aleph_1}$ such $T$'s.  For each of these we choose an extension $T'$ trivializing $\Phi \restriction \delta $, which exists by Lemma \ref{lemma_extendtriv}. By Lemma \ref{cardinality_lemma}, there are $2^{\aleph_1}$-many equivalence classes of $1$-families with the same coboundary for the equivalence relation given by differing modulo a $1$-trivial family on the chain  $\mathcal{C} \restriction \delta$ of cofinality $\omega_1$, so we can find a trivialization $\Upsilon^\delta$ of $\Phi \restriction \delta$ such that $\Upsilon^\delta - T'$ is (coherent) nontrivial for every $g$ as above. Then let $\partial({\Upsilon^\delta}^\frown \langle 0 \rangle)= \Phi \restriction \delta+1$. Here again there is nothing to check with respect to $(\dagger)$.

\end{itemize}

Now take a putative trivialization $\Psi$ of $\Phi$, and assume that it is coded by some $f: \lambda \to \lambda = \frc$. Let $S' \subseteq S$ be the stationary set where the sequence $\langle \mathcal{F}_\delta \mid \delta \in S\rangle
$ guesses $f$ as promised. Then, for $\delta \in S'$, and for some $g \in \mathcal{F}_\delta$, we have that $ B =\{\beta \in \dom(g) \mid g(\beta) = f(\beta) \}$ is unbounded in $\delta$. Then $\partial( {\Upsilon^\delta}^\frown\langle0\rangle) = \Phi \restriction \delta+1 $, where $\Upsilon^\delta - T' = \Theta^\delta$ for some extension $T'$ of the family $T$ that was coded by $g$ and some coherent nontrivial $\Theta^\delta$. But then, for any extension $\Upsilon^*$ of $\Psi \restriction B$ to a trivialization of $\Phi \restriction \delta +1$, and for any $\nu \in B$ we have 

\[ \Psi_\nu -\Upsilon^*_\delta  =^* \Phi_{\delta, \nu}  =^* \Psi_\nu + \Theta^\delta_\nu.\]

Hence, $-\Upsilon^*_\delta$ would trivialize $\Theta^\delta \restriction B$, a contradiction since  $\Theta^\delta$ is nontrivial over $\mathcal{C}\restriction \delta$ and $\sup B = \delta$. 
\end{proof}

\begin{corollary}
    Suppose $\mathfrak{b}=\mathfrak{d}=\mathfrak{c} > \omega_1$, and $\clubsuit^{<2^{\aleph_1}}_\rightharpoonup(S)$ and $\square(\lambda,S)$  both hold for some stationary set $S \subseteq S^{\mathfrak{c}}_{\omega_1}$. Then $\lim^2 \mathbf{A} \neq 0$. 
\end{corollary}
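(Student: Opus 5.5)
Under $\mathfrak{b}=\mathfrak{d}=\mathfrak{c}=\lambda$, which is regular since $\mathfrak{b}$ is, we reduce to the preceding theorem by finding a $\subseteq^*$-cofinal subfamily of $\mcI$ that is a strictly increasing chain of length $\lambda$.

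\emph{Step 1: a scale.} Since $\mathfrak{b}=\mathfrak{d}=\lambda$, there is a scale $\langle f_\delta \mid \delta<\lambda\rangle$: a $\leq^*$-increasing and $\leq^*$-cofinal sequence in ${}^\omega\omega$. We build it by recursion. Fix a dominating family $\{d_\delta\mid\delta<\lambda\}$. At stage $\delta$, the set $\{f_\xi \mid \xi<\delta\}\cup\{d_\delta\}$ has size $<\mathfrak{b}$, so it has a $\leq^*$-upper bound. Take $f_\delta$ to be such a bound plus one, so that the sequence is strictly increasing. We then set $a_\delta = I_{f_\delta}$. Because $f_\xi <^* f_\delta$ for $\xi<\delta$, each $I_{f_\delta}\setminus I_{f_\xi}$ is infinite and $I_{f_\xi}\subseteq^* I_{f_\delta}$. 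Hence $\mathcal{C}=\langle a_\delta\mid\delta<\lambda\rangle$ is a strictly increasing chain in $\mathcal{P}(\omega\times\omega)/\mathrm{fin}$. Identifying $\omega\times\omega$ with $\omega$ puts it in the form required by the preceding theorem. Since the $f_\delta$ are $\leq^*$-cofinal, $\mathcal{C}$ is $\subseteq^*$-cofinal in $\mcI$: any $I_g$ satisfies $I_g\subseteq^* I_{f_\delta}$ whenever $g\leq^* f_\delta$.

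\emph{Step 2: apply the theorem.} The hypotheses $\mathfrak{c}=\lambda>\omega_1$, $\clubsuit^{<2^{\aleph_1}}_\rightharpoonup(S)$ and $\square(\lambda,S)$ with $S\subseteq S^\lambda_{\omega_1}$ are exactly those of the preceding theorem. It therefore gives a coherent, nontrivial $2$-family $\Phi$ on $\mathcal{C}$.

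\emph{Step 3: transfer to $\mathbf{A}$.} This is the only point needing care. By the observation before Lemma \ref{lemma_extendtriv}, $\Phi$ extends to a coherent $2$-family $\Phi'$ on all of $\mcI$. Suppose $\Phi'$ were trivialized by some $\Psi$. Restricting to indices in $\mathcal{C}$ gives $\Psi\restriction\mathcal{C}$, which trivializes $\Phi'\restriction\mathcal{C}=\Phi$. This contradicts Step 2. So $\Phi'$ is a nontrivial coherent family, and by the Fact, $\lim^2\mathbf{A}\neq 0$.

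The main subtlety is that the extension of coherent families from a cofinal subset is only asserted informally in the paper. If needed, we would prove it directly: for each $u\in\mcI^2$, pick $\subseteq^*$-upper bounds in $\mathcal{C}$ for the coordinates of $u$, and define $\varphi_u$ via the coboundary formula, using values of $\Phi$ at those bounds. Alternatively, we avoid the extension altogether by using the invariance of $\lim^n$ under passing to cofinal subsystems, which is cited in the introduction. With that invariance, $\lim^2\mathbf{A}\cong\lim^2\A_{\mathcal{C}}\neq 0$ immediately.
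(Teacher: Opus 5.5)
Your proposal is correct and matches the reduction the paper leaves implicit (the corollary is stated without proof): $\mathfrak{b}=\mathfrak{d}=\mathfrak{c}$ gives a scale, hence a strictly $\subseteq^*$-increasing cofinal chain $\langle I_{f_\delta}\mid\delta<\mathfrak{c}\rangle$ in $\mathcal{I}$, and the preceding theorem plus invariance of $\lim^n$ under cofinal subsystems yields $\lim^2\mathbf{A}\neq 0$. Your Step 3 is a valid elementary substitute for citing that invariance: extend $\Phi$ to $\mathcal{I}$ via chosen upper bounds in $\mathcal{C}$, and note that any trivialization of the extension restricts to a trivialization of $\Phi$ on $\mathcal{C}$.
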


\bibliographystyle{amsplain}
\bibliography{bibliography}

\providecommand{\bysame}{\leavevmode\hbox to3em{\hrulefill}\thinspace}
\providecommand{\MR}{\relax\ifhmode\unskip\space\fi MR }
\providecommand{\MRhref}[2]{%
  \href{http://www.ams.org/mathscinet-getitem?mr=#1}{#2}
}
\providecommand{\href}[2]{#2}
\begin{thebibliography}{10}

\bibitem{BANNISTER_2025}
Nathaniel Bannister, \emph{Nonvanishing higher derived limits without $w\diamondsuit _{\omega _1}$}, The Journal of Symbolic Logic (2025), 1–17.

\bibitem{bekkali2006topics}
Mohamed Bekkali, \emph{Topics in set theory: Lebesgue measurability, large cardinals, forcing axioms, rho-functions}, Springer, 2006.

\bibitem{berg}
Jeffrey Bergfalk, \emph{Strong homology, derived limits, and set theory}, Fund. Math. \textbf{236} (2017), no.~1, 71--82.

\bibitem{WITHOUTLC}
Jeffrey Bergfalk, Michael Hru\v{s}\'{a}k, and Chris Lambie-Hanson, \emph{Simultaneously vanishing higher derived limits without large cardinals}, Journal of Mathematical Logic \textbf{23} (2023), no.~01, 2250019.

\bibitem{CASAROSA2026111076}
Matteo Casarosa and Chris Lambie-Hanson, \emph{Simultaneously nonvanishing higher derived limits}, Advances in Mathematics \textbf{500} (2026), 111076.

\bibitem{dvzamonja1996saturated}
Mirna D{\v{z}}amonja and Saharon Shelah, \emph{Saturated filters at successors of singulars, weak reflection and yet another weak club principle}, Annals of Pure and Applied Logic \textbf{79} (1996), no.~3, 289--316.

\bibitem{farah1996coherent}
Ilijas Farah, \emph{A coherent family of partial functions on $\mathbb{N}$}, Proceedings of the American Mathematical Society \textbf{124} (1996), no.~9, 2845--2852.

\bibitem{Gob70}
R\'{e}mi Goblot, \emph{Sur les d\'{e}riv\'{e}s de certaines limites projectives. {A}pplications aux modules}, Bull. Sci. Math. (2) \textbf{94} (1970), 251--255. \MR{274557}

\bibitem{mardevsic1988strong}
S~Marde{\v{s}}i{\'c} and Andrei~V Prasolov, \emph{Strong homology is not additive}, Transactions of the American Mathematical Society \textbf{307} (1988), no.~2, 725--744.

\bibitem{mitchell1972rings}
Barry Mitchell, \emph{Rings with several objects}, Advances in Mathematics \textbf{8} (1972), no.~1, 1--161.

\bibitem{PRASOLOV2005493}
Andrei~V. Prasolov, \emph{Non-additivity of strong homology}, Topology and its Applications \textbf{153} (2005), no.~2, 493--527, Proceedings of the Second International Conference on Geometric Topology.

\bibitem{TALAYCO199569}
Daniel~E. Talayco, \emph{Applications of cohomology to set theory i: Hausdorff gaps}, Annals of Pure and Applied Logic \textbf{71} (1995), no.~1, 69--106.

\bibitem{VV24}
Boban Veli\v{c}kovi\'{c} and Alessandro Vignati, \emph{Non-vanishing higher derived limits}, Commun. Contemp. Math. \textbf{26} (2024), no.~7, Paper No. 2350031, 22. \MR{4760549}

\end{thebibliography}
\end{document}